\renewcommand{\phi}{\varphi}
\renewcommand{\epsilon}{\varepsilon}
\renewcommand{\theta}{\vartheta}
\def\ZZ{{\mathbf Z}}
\def\NN{{\mathbf N}}
\def\FF{{\mathbf F}}
\def\AAA{{\mathbf A}}
\def\RR{{\mathbf R}}
\def\QQ{{\mathbf Q}}
\def\cO{\mathcal{O}}
\def\fra{\mathfrak{a}}
\def\alb{{\alpha\beta}}
\DeclareMathOperator{\codim}{codim} 
\DeclareMathOperator{\Hom}{Hom}
 \DeclareMathOperator{\Spec}{Spec}
 \DeclareMathOperator{\lct}{lct}
 \DeclareMathOperator{\ord}{ord}
 \DeclareMathOperator{\fpt}{fpt}
\DeclareMathOperator{\cont}{Cont}
\DeclareMathOperator{\Sch}{Sch}
\DeclareMathOperator{\id}{id}
\newcommand{\llb}{[\negthinspace[}
\newcommand{\rrb}{]\negthinspace]}
\newtheorem{lemma}{Lemma}[section]
\newtheorem{theorem}[lemma]{Theorem}
\newtheorem{corollary}[lemma]{Corollary}
\newtheorem{proposition}[lemma]{Proposition}
\newtheorem{theoremalpha}{Theorem}
\theoremstyle{definition}
\newtheorem{definition}[lemma]{Definition}
\newtheorem{remark}[lemma]{Remark}
\theoremstyle{remark}
\newtheorem*{remark*}{Remark}
\newtheorem*{note*}{Note}
\begin{document}
\title{Log canonical thresholds in positive characteristic}

%\thanks{2000\,\emph{Mathematics Subject Classification}.
% Primary 13A35; Secondary 14B05, 32S40.
%\newline The author
%  was partially supported by the NSF grant DMS 0500127 and
  %by a Packard Fellowship}
%\keywords{Bernstein-Sato polynomials, V-filtration, test ideals}

\author[Z.Zhu]{Zhixian~Zhu}
\address{Department of Mathematics, University of Michigan,
Ann Arbor, MI 48109, USA} \email{{\tt zhixian@umich.edu}}
\maketitle

\markboth{Z.~Zhu}{Log canonical thresholds in positive characteristic}

\begin{abstract}
In this paper, we study the singularities of a pair $(X,Y)$ in arbitrary characteristic via jet schemes. For a smooth variety $X$ in characteristic $0$, Ein, Lazarsfeld and Musta\c{t}\v{a} showed that there is a correspondence between irreducible closed cylinders and divisorial valuations on $X$. Via this correspondence, one can relate the codimension of a cylinder to the log discrepancy of the corresponding divisorial valuation. We now extend this result to positive characteristic. In particular, we prove Musta\c{t}\v{a}'s log canonical threshold formula avoiding the use of log resolutions, making the formula available also in positive characteristic. As a consequence, we get a comparison theorem via reduction modulo $p$ and a version of Inversion of Adjunction in positive characteristic.
\end{abstract}

\section*{introduction}
Let $k$ be an algebraically closed field of arbitrary characteristic. Given $m\geq 0$ and a scheme $X$ over $k$, we denote by
$$X_m=\Hom(\Spec k[t]/(t^{m+1}),X)$$
the $m^{\text{th}}$ order jet scheme of $X$. If $X$ is a smooth variety of dimension $n$, then $X_m$ is a smooth variety of dimension $n(m+1)$ and the truncation morphism $\rho^{m+1}_{m}: X_{m+1}\rightarrow X_m$ is locally trivial with fiber $\AAA^n$.

The space of arcs $X_{\infty}$ is the projective limit of the $X_m$ and thus parameterizes all formal arcs on $X$. One writes $\psi_{m}: X_{\infty}\rightarrow X_m$ for the natural map. The inverse images of constructible subsets by the canonical projections $\psi_m: X_{\infty}\to X_m$ are called \emph{cylinders}. Interesting examples of such subsets arise as follows.
Consider a non-zero ideal sheaf $\fra\subset \mathcal{O}_{X}$ defining a subscheme $Y\subset X$. For every $p\geq 0$, the \emph{contact locus} of order $p$ of $\fra$  is a locally closed cylinder
$$\text{Cont}^p(Y)=\text{Cont}^p(\fra):=\left\{\gamma\in X_{\infty}~|~\ord_{\gamma}(\fra)=p\right\}.$$
Similarly, we define a closed cylinder
$$\text{Cont}^{\geq p}(Y)=\text{Cont}^{\geq p}(\fra):=\left\{\gamma\in X_{\infty}~|~\ord_{\gamma}(\fra)\geq p\right\}.$$
Jet schemes and arc spaces are fundamental objects for the theory of motivic integration, due to Kontsevich
\cite{Kon} and Denef and Loeser \cite{DL}. Furthermore, in characteristic $0$, using the central result of this theory, the Change of Variable formula, one can show that there is a close link between the log discrepancy defined in terms of divisorial valuations and the geometry of the contact loci in arc spaces. This link was first explored by Musta\c{t}\v{a} in \cite{Mus1} and \cite{Mmus2}, and then further studied in \cite{EMY}, \cite{Ish}, \cite{ELM} and \cite{FEI}.

The main purpose of this paper is to show that the correspondence between irreducible closed cylinders and divisorial valuations in \cite{ELM} holds for smooth varieties of arbitrary characteristic. We now explain it as follows. Let $X$ be a smooth variety of dimension $n$ over $k$. An important class of valuations of the function field $k(X)$ of $X$ consists of \emph{divisorial valuations}. These are the valuations of the form
$$\nu=q\cdot\ord_E: k(X)^*\rightarrow \ZZ$$
where $E$ is a divisor over $X$, (that is, a prime divisor on a normal variety $X'$, having a birational morphism to $X$) and $q$ is a positive integer number. One can associate an integer number to a divisorial valuation $\nu=q\cdot\ord_E$, called the log discrepancy of $\nu$, equal to $q\cdot(1+\ord_{E}(K_{X'/X}))$, where $K_{X'/X}$ is the relative canonical divisor. These numbers determine the log canonical threshold $\lct(X,Y)$ of a pair $(X,Y)$, where $Y$ is a closed subscheme of $X$.

For every closed irreducible nonempty cylinder $C\subset X_{\infty}$ which does not dominate $X$, one defines
$$\ord_C:k(X)^*\rightarrow \ZZ$$
by taking the order of vanishing along the generic point of $C$. These valuations are called \emph{cylinder valuations}. If $C$ is an irreducible component of $\text{Cont}^{\geq p}(Y)$ for some subscheme $Y$ of $X$, the valuation $\ord_C$ is called a \emph{contact valuation}. It is easy to see that every divisorial valuation is a cylinder valuation. When the ground field is of characteristic zero, Ein, Lazarsfeld and Musta\c{t}\v{a} showed the above classes of valuations coincide, by showing that:
\begin{enumerate}
\item[(a)] Every contact valuation is a divisorial valuation (\cite[Theorem A]{ELM});\\
\item[(b)] every cylinder valuation is a contact valuation (\cite[Theorem C]{ELM}).
\end{enumerate}
We thus have a correspondence between the irreducible closed cylinders that do not dominate $X$ and divisorial valuations. Via this correspondence, one can relate the codimension of the cylinder to the log discrepancy of the divisorial valuation. This yields a quick proof of Musta\c{t}\v{a}'s log canonical threshold formula.

\begin{theorem} ${\rm (}$\cite{Mmus2}{\rm [Corollary 3.6]}, \cite{ELM}{\rm[Corollary B]}${\rm )}$
If $X$ is a smooth complex variety and $Y\subset X$ is a closed subscheme, then the log canonical threshold of the pair $(X,Y)$ is given by
\begin{equation*}
\lct(X,Y)=\min\limits_{m}\left\{\frac{\codim(Y_m,X_m)}{m+1}\right\}.
\end{equation*}
\end{theorem}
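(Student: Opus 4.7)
The plan is to derive the formula directly from the correspondence between irreducible closed cylinders and divisorial valuations sketched above---statements (a), (b), and the identification $\codim(C,X_\infty) = 1+\ord_E(K_{X'/X})$ for $\ord_C = \ord_E$---which the rest of the paper extends to positive characteristic. The only external input is the working characterization
\[
\lct(X,Y) \;=\; \inf_{\nu = \ord_E} \frac{1+\ord_E(K_{X'/X})}{\ord_E(Y)},
\]
the infimum running over divisorial valuations with $\nu(Y) > 0$. Since the truncations $X_\infty \to X_m$ are locally trivial surjections of the expected codimension, $\psi_m^{-1}(Y_m) = \cont^{\geq m+1}(Y)$ and $\codim(Y_m,X_m) = \codim(\cont^{\geq m+1}(Y), X_\infty)$, so both sides of the desired formula can be read off from cylinder codimensions in the arc space.

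For the inequality $\lct(X,Y) \leq \codim(Y_m,X_m)/(m+1)$, fix $m$ and pick an irreducible component $C$ of $\cont^{\geq m+1}(Y)$ of minimal codimension. By (a), $\ord_C = q\cdot\ord_E$ is divisorial, and the codimension part of the correspondence gives $\codim(C,X_\infty) = q\bigl(1 + \ord_E(K_{X'/X})\bigr)$; by construction $q\cdot\ord_E(Y) = \ord_C(Y) \geq m+1$. Hence
\[
\frac{\codim(Y_m,X_m)}{m+1} \;=\; \frac{q\bigl(1+\ord_E(K_{X'/X})\bigr)}{m+1} \;\geq\; \frac{1+\ord_E(K_{X'/X})}{\ord_E(Y)} \;\geq\; \lct(X,Y),
\]
and minimizing over $m$ yields one half of the theorem.

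For the reverse inequality, let $\nu = \ord_E$ be an arbitrary divisorial valuation with $p := \ord_E(Y) > 0$. The correspondence produces an irreducible closed cylinder $C_\nu \subset X_\infty$ with $\ord_{C_\nu} = \nu$ and $\codim(C_\nu,X_\infty) = 1+\ord_E(K_{X'/X})$; by lower semi-continuity of contact order, $C_\nu \subset \cont^{\geq p}(Y) = \psi_{p-1}^{-1}(Y_{p-1})$, so
\[
\frac{\codim(Y_{p-1},X_{p-1})}{p} \;\leq\; \frac{\codim(C_\nu,X_\infty)}{p} \;=\; \frac{1+\ord_E(K_{X'/X})}{\ord_E(Y)}.
\]
Taking the infimum over $\nu$ yields $\min_m \codim(Y_m,X_m)/(m+1) \leq \lct(X,Y)$, which together with the previous step gives the claimed equality.

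The main obstacle is absorbed entirely into the cylinder/divisorial correspondence: in positive characteristic one no longer has a log resolution to identify an irreducible component of a contact locus with a divisor on some model, nor to read off its log discrepancy from $K_{X'/X}$. The real work of the paper lies in constructing the correspondence intrinsically from the geometry of jet schemes; once that is done, the log canonical threshold formula reduces to the short computation above, and the $\min$ rather than $\inf$ on the right comes for free, since $\lct(X,Y)$ is realized by some divisorial $\nu$, which forces equality at $m = \nu(Y)-1$.
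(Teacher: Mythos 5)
Your proof takes essentially the same route as the paper: both reduce the formula to the cylinder/divisorial-valuation correspondence of Theorem~\ref{corr}, establish the inequality $\lct \le \inf_m \codim(Y_m,X_m)/(m+1)$ by producing a cylinder from a minimal-codimension component of $Y_m$ and invoking part~(1), and obtain the reverse inequality from the equality $\codim C_\nu = q(1+\ord_E(K_{-/X}))$ in part~(2) together with $C_\nu \subset \cont^{\geq \nu(Y)}(Y)$.

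One small over-assertion: in the first direction you write $\codim(C,X_\infty) = q\bigl(1+\ord_E(K_{X'/X})\bigr)$ for an arbitrary irreducible component $C$ of $\cont^{\geq m+1}(Y)$. That equality is what \cite{ELM} proves (using log resolutions) for contact valuations, but the paper's Theorem~\ref{corr}(1), which is the log-resolution-free statement you actually have available, only gives $\codim C \geq q\bigl(1+\ord_E(K_{-/X})\bigr)$. Fortunately the inequality points the right way: $\codim(Y_m,X_m)/(m+1) = \codim C/(m+1) \geq q(1+\ord_E(K))/(q\,\ord_E Y) \geq \lct(X,Y)$, so the argument is unaffected. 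Also, the closing observation that the infimum is a minimum does hold over $\CC$ because some divisor on a log resolution computes $\lct$; the paper records this point in a remark after Proposition~\ref{genformula}, and in positive characteristic the statement of Theorem~\ref{jetformula} is given with $\inf$ rather than $\min$ for exactly this reason.
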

The key ingredients in the proofs of the above theorems in \cite{ELM} are the Change of Variable formula developed in the theory of motivic integration and the existence of log resolutions. While a version of the Change of Variable formula also holds in positive characteristic, the use of log resolutions in the proofs in \cite{Mmus2} and \cite{ELM} restricted the result to ground fields of characteristic zero. In this paper, we show by induction on the codimension of cylinders and only using the Change of Variable formula for blow-ups along smooth centers that the above correspondence between divisorial valuations and cylinders holds in arbitrary characteristic.
\begin{theoremalpha}\label{corr}
Let $X$ be a smooth variety of dimension $n$ over a perfect field $k$. There is a correspondence between irreducible closed cylinders $C\subset X_{\infty}$ that do not dominate $X$ and divisorial valuations as follows:
\begin{itemize}
\item[(1)]
If $C$ is an irreducible closed cylinder which does not dominate $X$, then there is a divisor $E$ over $X$ and a positive integer $q$ such that
\begin{equation*}
\ord_C=q\cdot\ord_E.
\end{equation*}
Furthermore, we have $\codim C\geq q\cdot(1+\ord_E(K_{-/X}))$.\\
\item[(2)] To every divisor $E$ over $X$ and every positive integer $q$, we can associate an irreducible closed cylinder $C$ which does not dominate $X$ such that
\begin{equation*}
\ord_C=q\cdot\ord_E {~and}~\codim C=q\cdot(1+\ord_E(K_{-/X})).
\end{equation*}
\end{itemize}
\end{theoremalpha}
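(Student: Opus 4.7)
The two directions are treated separately: part~(2) is a direct construction, while part~(1) is proved by induction on $d=\codim(C,X_{\infty})$.

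\emph{Part (2).} Given a divisorial valuation $\nu=q\cdot\ord_E$, I first realize $E$ as a prime divisor on a smooth birational model $\pi\colon X'\to X$ obtained from $X$ by a finite sequence of blow-ups along smooth centers. This can be built inductively: at each stage the center of $\nu$ on the current model is smooth after passing to an affine open (here the perfectness of $k$ is used to guarantee a nonempty smooth locus of the reduced center), and blowing it up strictly drops the codimension of the center until that center is a divisor, which is necessarily $E$. On such an $X'$ put $C'=\text{Cont}^{\geq q}(E)\subseteq X'_{\infty}$, a cylinder of codimension $q$ since $E$ is smooth, and set $C=\overline{\pi_{\infty}(C')}$. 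Iterating the Change of Variable formula for single smooth blow-ups along the chain yields
\begin{equation*}
\codim_{X_{\infty}} C \;=\; q+\nu(K_{X'/X}) \;=\; q\bigl(1+\ord_E(K_{X'/X})\bigr),
\end{equation*}
and $\ord_C=q\cdot\ord_E$ follows from the construction.

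\emph{Part (1).} Let $\nu=\ord_C$ and let $Z\subseteq X$ be its center. In the \emph{base case} $Z$ is a divisor, in which case $\cO_{X,Z}$ is a DVR and any $\ZZ$-valued valuation of $k(X)$ dominating it is of the form $q\cdot\ord_Z$ for some integer $q\geq 1$; the inclusion $C\subseteq\text{Cont}^{\geq q}(Z)$ together with the fact that $\text{Cont}^{\geq q}(Z)$ has codimension $q$ near a smooth point of $Z$ gives $\codim C\geq q = q(1+\ord_Z(K_{X/X}))$. Suppose instead $\codim Z\geq 2$. Using that $k$ is perfect, choose an affine open $U\subseteq X$ containing the generic point of $Z$ on which $W:=(Z_{\mathrm{red}})_{\mathrm{sm}}\cap U$ is a smooth closed subvariety; the restriction of $C$ to $U_{\infty}$ remains a cylinder of the same codimension, so we may assume $X=U$. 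Let $\pi\colon\widetilde X\to X$ be the blow-up of $X$ along $W$. The generic arc of $C$ lifts uniquely through $\pi_{\infty}$ (since $C$ does not lie entirely over $W$), producing a cylinder $\widetilde C\subseteq\widetilde X_{\infty}$ with $\ord_{\widetilde C}=\nu$. The Change of Variable formula for the smooth blow-up $\pi$ gives
\begin{equation*}
\codim_{X_{\infty}} C \;=\; \codim_{\widetilde X_{\infty}}\widetilde C \,+\, \nu(K_{\widetilde X/X}),
\end{equation*}
and because the center of $\nu$ on $\widetilde X$ lies on the exceptional divisor of a blow-up with $\codim W\geq 2$ we have $\nu(K_{\widetilde X/X})\geq\codim W-1\geq 1$. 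Hence $\codim\widetilde C<d$, and the inductive hypothesis applied to $\widetilde C$ on $\widetilde X$ produces a divisor $E$ and an integer $q\geq 1$ with $\nu=q\cdot\ord_E$ and $\codim\widetilde C\geq q(1+\ord_E(K_{-/\widetilde X}))$. Combining this with the identity $K_{-/X}=K_{-/\widetilde X}+\pi^{*}K_{\widetilde X/X}$ yields the required $\codim C\geq q(1+\ord_E(K_{-/X}))$.

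\emph{Main obstacle.} The characteristic-zero argument of \cite{ELM} uses a global log resolution together with the full motivic Change of Variable formula to compute codimensions in one stroke; here, since log resolutions are unavailable, one must instead work one smooth center at a time. The delicate point is verifying that at each inductive step $C$ genuinely lifts to a cylinder $\widetilde C$ for which the Change of Variable formula produces the exact codimension identity above, rather than merely an inequality. Carefully establishing this lifting, together with bookkeeping how log discrepancies accumulate across the successive blow-ups, is the technical heart of the argument.
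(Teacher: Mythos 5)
Your overall strategy matches the paper's: part (2) is constructed by a tower of blow-ups along smooth centers (shrinking to smooth open loci, legitimate over a perfect field), and part (1) is an inductive reduction via smooth blow-ups and the Change of Variable formula for a single smooth blow-up, with the base case being a smooth divisor. The codimension bookkeeping (adding $\nu(K_{X^{(i+1)}/X^{(i)}})$ at each step, and using the inclusion into $\mathrm{Cont}^{\geq q}(E)$ for the final inequality) is the same as in the paper, which proves (1) as Lemma~\ref{cyltoval} and (2) as Lemma~\ref{valtocyl}.

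There is, however, one genuine slip in your part~(2): you justify termination of the blow-up tower by asserting that blowing up the center of $\nu$ ``strictly drops the codimension of the center.'' That is false in general --- after blowing up a smooth center $Z$ of codimension $c$, the new center of $\nu$ can be a point on the exceptional divisor of codimension $n$, so the codimension of the center may stay the same or even increase. (Think of the valuation given by a second infinitely near point above a closed point of $\AAA^2$.) The correct reason the tower terminates is the classical result (Zariski; cited in the paper as \cite{KM}[Lemma 2.45]) that for a divisorial valuation, the sequence of blow-ups of centers eventually extracts $E$ as a prime divisor. Note that this is genuinely different from the descent argument you use in part~(1), where it is the codimension of the \emph{cylinder} (not of the center) that strictly decreases, because $\nu(K_{X'/X})>0$ as soon as the center is not a divisor. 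That numeric invariant is available in~(1) since you start with a cylinder, but in~(2) you start with a divisorial valuation alone and must invoke the abstract termination theorem.
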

We thus are able to prove the log canonical threshold formula avoiding use the log resolutions.
\begin{theoremalpha}\label{jetformula}
Let $X$ be a smooth variety of dimension $n$ defined over a perfect field $k$, and $Y$ be a closed subscheme. Then
$$\lct(X, Y)=\inf\limits_{C\subset X_{\infty}}\frac{\codim C}{\ord_C(Y)}=\inf\limits_{m\geq 0}\frac{\codim(Y_m,X_m)}{m+1}$$
where $C$ varies over the irreducible closed cylinders which do not dominate $X$.
\end{theoremalpha}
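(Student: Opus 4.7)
The plan is to derive both equalities directly from Theorem~A. Recall that $\lct(X,Y)$ is, by definition, $\inf_E \tfrac{1+\ord_E(K_{-/X})}{\ord_E(Y)}$, the infimum taken over divisors $E$ over $X$ with $\ord_E(Y)>0$. The first equality then becomes a matching of infima. To show $\inf_C \tfrac{\codim C}{\ord_C(Y)}\geq \lct(X,Y)$, I would take an irreducible closed cylinder $C$ not dominating $X$ with $\ord_C(Y)>0$, invoke Theorem~A(1) to write $\ord_C=q\cdot\ord_E$ with $\codim C\geq q(1+\ord_E(K_{-/X}))$, and divide to get $\tfrac{\codim C}{\ord_C(Y)}\geq \tfrac{1+\ord_E(K_{-/X})}{\ord_E(Y)}\geq \lct(X,Y)$. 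For the reverse, given any divisor $E$ with $\ord_E(Y)>0$, I would apply Theorem~A(2) with $q=1$ to produce an irreducible closed cylinder $C$ with $\ord_C=\ord_E$ and $\codim C=1+\ord_E(K_{-/X})$, so that the two ratios agree exactly; taking infimum over $E$ then yields $\lct(X,Y)\geq \inf_C \tfrac{\codim C}{\ord_C(Y)}$.

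For the second equality, the key input is the scheme-theoretic identity $\psi_m^{-1}(Y_m)=\cont^{\geq m+1}(Y)$, which follows immediately from definitions: an arc $\gamma$ has $m$-jet in $Y_m$ precisely when $\gamma^*\fra\equiv 0\pmod{t^{m+1}}$. Since $X$ is smooth, the truncations $\rho^{m+1}_m$ are locally trivial with affine fibers, whence $\codim(\psi_m^{-1}(Y_m),X_\infty)=\codim(Y_m,X_m)$. For $\inf_m\tfrac{\codim(Y_m,X_m)}{m+1}\geq \inf_C\tfrac{\codim C}{\ord_C(Y)}$, I would decompose $\cont^{\geq m+1}(Y)$ into irreducible components $C^{(m)}_i$; each is an irreducible closed cylinder which does not dominate $X$ (assuming $Y\subsetneq X$, every arc in it originates in $Y$) with $\ord_{C^{(m)}_i}(Y)\geq m+1$, and $\codim(Y_m,X_m)=\min_i \codim C^{(m)}_i$. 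Since $m+1\leq \ord_{C^{(m)}_i}(Y)$, we have $\tfrac{\codim C^{(m)}_i}{m+1}\geq \tfrac{\codim C^{(m)}_i}{\ord_{C^{(m)}_i}(Y)}\geq \inf_C\tfrac{\codim C}{\ord_C(Y)}$, and passing to the minimum delivers the bound. Conversely, for any irreducible closed cylinder $C$ not dominating $X$ with $p:=\ord_C(Y)>0$, the containment $C\subset \cont^{\geq p}(Y)=\psi_{p-1}^{-1}(Y_{p-1})$ yields $\codim C\geq \codim(Y_{p-1},X_{p-1})$, and dividing by $p$ completes the argument.

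The substantive difficulty has been entirely absorbed into Theorem~A: the cylinder/divisor correspondence together with the exact match of codimension and log discrepancy. Granted this, the present statement reduces to dimension bookkeeping and the very definition of $\lct$, with no further appeal to resolution of singularities. The only small points requiring verification in positive characteristic are the identity $\cont^{\geq p}(Y)=\psi_{p-1}^{-1}(Y_{p-1})$ and the codimension-preservation under $\psi_m$, both of which follow at once from the smoothness of $X$ and the local triviality of the truncation maps $\rho^{m+1}_m$ with affine-space fibers.
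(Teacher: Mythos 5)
Your proposal is correct and follows essentially the same route as the paper: the first equality is obtained from Theorem~\ref{corr} in both directions (parts (1) and (2) with $q=1$), and the second equality is proved exactly as in the paper via the identity $\psi_m^{-1}(Y_m)=\cont^{\geq m+1}(Y)$, taking components of $Y_m$ for one inequality and the inclusion $C\subset\cont^{\geq p}(Y)$ with $p=\ord_C(Y)$ for the other. The only cosmetic difference is that the paper states the first equality as an immediate consequence of Theorem~\ref{corr} without spelling out the two directions, and it explicitly dispatches the trivial case $Y=X$ first.
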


The paper is organized as follows. In the first section, we review some basic definitions and notation concerning cylinders and valuations. In section $2$ we prove a version of the Change of Variable formula and construct the correspondence between cylinders and divisorial valuations. The proofs of Theorem \ref{corr} and Theorem \ref{jetformula} are also given in $\S 2$.  In section $3$, we apply the log canonical threshold formula and obtain a comparison theorem via reduction modulo $p$, as well as a version of Inversion of Adjunction in positive characteristic.

\section*{Acknowledgement}
I am grateful to my advisor Mircea Musta\c{t}\v{a} for suggesting this project to me and useful discussions.

\section{introduction to jet schemes and log canonical threshold}
In this section, we first recall the definition and basic properties of jet schemes and arc spaces. For a more detailed discussion of jet schemes, see \cite{EM} or \cite{Mus1}.

We start with the absolute setting and explain the relative version of jet schemes later. Let $k$ be a field of arbitrary characteristic. A variety is an integral scheme, separated and of finite type over $k$. Given a scheme $X$ of finite type over $k$ and an integer $m\geq 0$, the $m^{\text{th}}$ order jet scheme $X_m$  of $X$ is a scheme of finite type over $k$ satisfying the following adjunction
\begin{equation}\label{adjunction}
\Hom_{\Sch/k}(Y,X_m)\cong\Hom_{\Sch/k}(Y\times \Spec k[t]/(t^{m+1}), X)
\end{equation}
for every scheme $Y$ of  finite type over $k$. It follows that if $X_m$ exists, then it is unique up to a canonical isomorphism. We will show the existence in Proposition \ref{existofJS}.

Let $L$ be a field extension of $k$. A morphism $\Spec L[t]/(t^{m+1})\rightarrow X$ is called an $L$--valued $m$--jet of $X$. If $\gamma_m$ is a point in $X_m$, we call it an $m$--jet of $X$. If $\kappa$ is the residue field of $\gamma_m$, then $\gamma_m$ induces a morphism $(\gamma_m)_{\kappa}:\Spec \kappa[t]/(t^{m+1})\rightarrow X$.

It is easy to check that $X_0=X$. For every $j\leq m$, the natural ring homomorphism $k[t]/(t^{m+1})\rightarrow  k[t]/(t^{j+1})$ induces a closed embedding $$\Spec k[t]/(t^{j+1})\rightarrow \Spec k[t]/(t^{m+1})$$ and the adjunction (\ref{adjunction}) induces a truncation map $\rho^m_j: X_m\rightarrow X_j$. For simplicity, we usually write $\pi_m^X$ or simply  $\pi_m$ for the projection $\rho^m_0:X_m\rightarrow X=X_0$. A morphism of schemes $f: X\rightarrow Y$ induces morphisms $f_m \colon X_m\to Y_m$ for every $m$. At the level of $L$--valued points, this takes an $L[t]/(t^{m+1})$--valued point $\gamma$ of $X_m$ to $f\circ \gamma$.
For every point $x\in X$, we write $X_{m,x}$ for the fiber of $\pi_m$ at $x$, the $m$--jets of $X$ centered at $x$.

In section $4$, we will use the relative version of jet schemes. We now recall some basic facts about this context.

We work over a fixed separated scheme $S$ of finite type over a noetherian ring  $R$. Let $f: W\rightarrow S$ be a scheme of finite type over $S$. If $s$ is a point in $S$, we denote by $W_s$ the fiber of $f$ over $s$.

\begin{definition}The $m^{\text{th}}$ relative jet scheme $(W/S)_m$ satisfies the following adjunction
\begin{equation}\label{reladj}
\Hom_{\Sch/S}(Y\times_{R} \Spec R[t]/(t^{m+1}), W)\cong \Hom_{\Sch/S}(Y, (W/S)_m),
\end{equation}
for every scheme of finite type $Y$ over $S$.
\end{definition}
As in the absolute setting, we have $(W/S)_0\cong W$. If $(W/S)_m$ and $(W/S)_j$ exist with $m\geq j$, then there is a canonical projection $\rho_j^m: (W/S)_m\rightarrow (W/S)_j$. For simplicity, we usually write $\pi_{m}$ for the projection $\rho_0^m: (W/S)_m\rightarrow W$.

We now prove the existence of the relative jet schemes, which is similar to that of the absolute case. For details, see \cite{Mmus2}.

\begin{proposition}\label{existofJS}
If $f: W\rightarrow S$ is a scheme of finite type over $S$, the $m^{\textit{th}}$ order relative jet scheme $(W/S)_m$ exists for every $m\in \NN$.
\end{proposition}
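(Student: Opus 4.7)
The plan is to mirror the absolute construction by first representing the functor locally in the affine case, and then gluing along open immersions. The universal property in (\ref{reladj}) forces uniqueness up to canonical isomorphism on overlaps, so the two substantive tasks are the local existence and the compatibility of the formation of $(W/S)_m$ with restriction to open subschemes.

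First I would handle the affine case: assume $S = \Spec R$ and $W = \Spec A$ with $A = R[x_1,\ldots,x_n]/(f_1,\ldots,f_r)$. For a test affine $S$-scheme $Y = \Spec B$, an $S$-morphism $Y \times_R \Spec R[t]/(t^{m+1}) \to W$ is an $R$-algebra map $A \to B[t]/(t^{m+1})$, equivalently truncated expansions $x_i(t) = \sum_{\ell=0}^{m} b_{i,\ell}\, t^\ell$ satisfying $f_j(x_1(t),\ldots,x_n(t)) \equiv 0 \pmod{t^{m+1}}$ for each $j$. Expanding and collecting powers of $t$ produces $r(m+1)$ universal polynomial relations $F_{j,\ell}(b_{i,k}) = 0$ in the $n(m+1)$ coefficients, cutting out a closed subscheme of $\AAA^{n(m+1)}_S$ that represents the functor on affine $Y$; the extension to arbitrary $Y$ is formal Yoneda. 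Next I would establish the key compatibility: if $U \hookrightarrow W$ is an open immersion and $(W/S)_m$ exists with truncation $\pi_m \colon (W/S)_m \to W$, then $(U/S)_m$ exists and equals $\pi_m^{-1}(U)$. The verification uses that the closed immersion $Y \hookrightarrow Y \times_R \Spec R[t]/(t^{m+1})$ coming from $t \mapsto 0$ is a universal homeomorphism, so an $S$-morphism out of $Y \times_R \Spec R[t]/(t^{m+1})$ factors through $U$ precisely when its restriction to $Y$ does.

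With these two ingredients, the global construction is routine: cover $S$ by affine opens $\{S_\beta\}$, cover each $W \times_S S_\beta$ by affines $\{W_{\beta,\alpha}\}$, produce each $(W_{\beta,\alpha}/S_\beta)_m$ via the affine construction, and glue via the compatibility statement applied to pairwise intersections; the cocycle conditions are automatic from uniqueness of the representing object. The main obstacle is precisely the compatibility step, since the gluing hinges on correctly identifying open subschemes of jet schemes with jet schemes of open subschemes; everything else is formal manipulation or parallels the well-documented absolute construction.
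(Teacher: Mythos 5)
Your proposal follows the same route as the paper: construct $(W/S)_m$ explicitly as a closed subscheme of $\AAA^{n(m+1)}_S$ in the affine case by expanding the defining equations in powers of $t$, then glue over an affine cover of $W$ (after reducing to affine $S$) using compatibility of the construction with open immersions. The only point of divergence is cosmetic: the paper states the open-immersion compatibility as Lemma \ref{openembbeding} and declares it clear, whereas you actually justify it via the universal-homeomorphism property of $Y \hookrightarrow Y \times_R \Spec R[t]/(t^{m+1})$, which is a correct and welcome fill-in.
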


It is clear that the construction of relative jet scheme is compatible with open embeddings.

\begin{lemma}\label{openembbeding}
Let $U$ be an open subset of a scheme $W$ over $S$. If $(W/S)_m$ exists, then $(U/S)_m$ exists and $(U/S)_m\cong(\pi_m^{W})^{-1}(U)$.
\end{lemma}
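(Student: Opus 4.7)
The plan is to verify directly that the open subscheme $V := (\pi_m^W)^{-1}(U)$ of $(W/S)_m$ satisfies the defining universal property (\ref{reladj}) for $U$ in place of $W$. Concretely, for every scheme $Y$ of finite type over $S$, I will exhibit a natural bijection
\[
\Hom_{\Sch/S}\bigl(Y\times_R \Spec R[t]/(t^{m+1}),\, U\bigr) \;\cong\; \Hom_{\Sch/S}(Y, V),
\]
obtained by restricting the adjunction (\ref{reladj}) for $W$. By the uniqueness of a representing object, this will simultaneously prove that $(U/S)_m$ exists and that $(U/S)_m \cong V$.

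First, I would invoke the topological fact that the closed immersion $Y \hookrightarrow Y \times_R \Spec R[t]/(t^{m+1})$ coming from $t\mapsto 0$ is a homeomorphism on underlying topological spaces, since it is the base change of the nilpotent thickening $\Spec R \hookrightarrow \Spec R[t]/(t^{m+1})$. Consequently, given a morphism $\phi \colon Y\times_R \Spec R[t]/(t^{m+1}) \to W$, its set-theoretic image lies in $U$ if and only if the image of its restriction $\phi_0 \colon Y \to W$ to the closed subscheme $Y$ does. Because $U$ is open in $W$, $\phi$ (respectively $\phi_0$) factors uniquely through $U$ exactly when its image is contained in $U$. Thus the subset of morphisms $Y\times_R \Spec R[t]/(t^{m+1}) \to W$ landing in $U$ is precisely the preimage, under restriction, of the subset of morphisms $Y \to W$ landing in $U$.

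Second, I would identify this restriction with $\pi_m$ on the right-hand side of (\ref{reladj}): under the adjunction, the morphism $\pi_m \circ \alpha \colon Y \to W$ attached to $\alpha \colon Y \to (W/S)_m$ is exactly the restriction $\phi_0$ of the corresponding $m$-jet $\phi$. Hence $\phi$ factors through $U$ if and only if $\pi_m \circ \alpha$ factors through $U$, if and only if $\alpha$ factors through $V = (\pi_m^W)^{-1}(U)$. Restricting (\ref{reladj}) to these subsets on both sides yields the claimed natural bijection, and naturality in $Y$ is automatic from that of (\ref{reladj}). I do not expect any real obstacle here: the argument is a formal consequence of the universal property together with the topological invariance of the nilpotent thickening $\Spec R[t]/(t^{m+1}) \to \Spec R$; the only point requiring a moment's care is to match the restriction $\phi \mapsto \phi_0$ on jets with the truncation $\alpha \mapsto \pi_m \circ \alpha$ on the representing object.
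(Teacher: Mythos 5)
The paper states this lemma without proof, remarking only that ``it is clear that the construction of relative jet scheme is compatible with open embeddings,'' and then uses it implicitly in the gluing step of the proof of Proposition~\ref{existofJS}. Your argument supplies the missing verification and is correct: you check directly that $V=(\pi_m^W)^{-1}(U)$ represents the jet functor for $U$, using that the nilpotent closed immersion $Y\hookrightarrow Y\times_R\Spec R[t]/(t^{m+1})$ is a homeomorphism, so a morphism from $Y\times_R\Spec R[t]/(t^{m+1})$ to $W$ factors through the open $U$ if and only if its restriction $\phi_0=\pi_m\circ\alpha$ to $Y$ does, if and only if the adjoint $\alpha$ factors through $V$. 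This is the standard (and presumably intended) argument; the only small point you rightly flag is the identification of the restriction $\phi\mapsto\phi_0$ with composition by $\pi_m$, which follows from the functoriality of the truncation $\rho^m_0$.
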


We now prove Proposition \ref{existofJS} by first constructing the relative jet scheme locally and gluing the schemes along overlaps.

\begin{proof} By covering $S$ by affine open subschemes, we may and will assume $S$ is an affine scheme. Let $S=\Spec A$, where $A$ is a finitely generated $R$--algebra. We first construct $(W/S)_m$ when $W$ is an affine scheme over $S$.  Let $W=\Spec B$ for some $A$--algebra $B$. Consider a closed embedding $W\rightarrow \AAA^n_S$ such that $W$ is defined by the ideal $I=(f_1, \ldots, f_r)\subseteq A[x_1,\ldots, x_n]$.  An $S$--morphism $\phi: \Spec B[t]/(t^{m+1})\rightarrow W$ is given by $\phi^*(x_i)=\sum\limits_{j=0}^m b_{i,j}t^j$ with $b_{i,j}\in B$ such that $f_l(\phi^*(x_1),\ldots, \phi^*(x_n))=0$ in $B[t]/(t^{m+1})$ for every $l$.

Given any $u_i=\sum\limits_{j=0}^m a_{i,j} t^j$ in $A[t]/(t^{m+1})$ for $1\leq i\leq n$, we can write
\begin{equation}\label{affinejetseqn}
f_l(u_1,\ldots,u_n)=\sum\limits_{p=0}^{m} g_{l,p}(a_{i,j})t^p,
\end{equation}
for some polynomials $g_{l,p}$ in $A[a_{i,j}]$ with $1\leq i\leq n$ and $0\leq j\leq m$. Let $Z$ be the closed subscheme of $\AAA^{n(m+1)}_S=\Spec A[a_{i,j}]$ defined by $(g_{l,p})$ for $1\leq l\leq r$ and $0\leq p\leq m$. It is clear that $\phi$ is a $B[t]/(t^{m+1})$--valued point of $(W/S)$ if and only if the corresponding $(b_{i,j})$ defines a $B$--valued point of $Z$. Hence $(X/S)_m\cong Z$.

Given $W$ an arbitrary $S$--scheme of finite type, we consider an affine open cover $W=\bigcup\limits_{\alpha} W_{\alpha}$. We have seen that $(W_{\alpha}/S)_m$ exists for every $m\geq 0$. Let $\pi_m^{\alpha}: (W_{\alpha}/S)_m\rightarrow W_{\alpha}$ be the canonical projection. For every $\alpha$ and $\beta$, we write $W_{\alb}=W_{\alpha}\cap W_{\beta}$. The inverse image $(\pi^\alpha_m)^{-1}(W_{\alb})$ and $(\pi^\beta_m)^{-1}(W_{\alb})$ are canonically isomorphic since they are isomorphic to $(W_{\alb}/S)_m$. Hence we can construct a scheme $(W/S)_m$ by gluing the schemes $(W_{\alpha}/S)_m$ along their overlaps. Moreover, the projections $\pi_m^{\alpha}$ glue to give an $S$--morphism $$\pi_m: (W/S)_m\rightarrow W.$$ It is clear that $(W/S)_m$ is the $m^{\textit{th}}$ relative jet scheme of $W$ over $S$.
\end{proof}

For every scheme morphism $S'\rightarrow S$ and every $W/S$ as above, we denote by $W'$ the fiber product $W\times_S S'$. For every point $s\in S$, we denote by $W_s$ the fiber of $W$ over $s$. By the functorial definition of relative jet schemes, we can check that $$(W'/S')_m\cong (W/S)_m\times_S S'$$ for every $m$. In particular, for every $s\in S$, we conclude that the fiber of $(W/S)_{m}\rightarrow S$ over $s$ is isomorphic to $(W_s)_m$.

Recall that $\pi_m: (W/S)_m\rightarrow W$ is the canonical projection. We now show that there is an $S$--morphism, called the zero-section map, $\sigma_m: W\rightarrow (W/S)_m$ such that $\pi_m\circ \sigma_m=\id_W$. We have a natural map $g_m: W\times \Spec R[t]/(t^{m+1})\rightarrow W$, the projection onto the first factor. By (\ref{reladj}), $g_m$ induces a morphism $\sigma_m^W: W\rightarrow (W/S)_m$, the {\it zero-section} of $\pi_m$. For simplicity, we usually write $\sigma_m$ for $\sigma_m^W$. One can check that $\pi_m\circ \sigma_m=\id_W$.

Note that for every $m$ and every scheme $W$ over $S$, there is a natural action:
$$\Gamma_m: \AAA^{1}_S\times_S (W/S)_m\rightarrow (W/S)_m$$ of the affine group $\AAA^1_S$ on the jet schemes $(W/S)_m$ defined as follows. For an $A$--valued point $(a,\gamma_m)$ of $\AAA^1_S\times_S (W/S)_m$ where $a\in A$ and $\gamma_m:\Spec A[t]/(t^{m+1})\rightarrow W$, we define $\Gamma_m(a,\gamma_m)$ as the composition map $\Spec A[t]/(t^{m+1})\xrightarrow{a^*}\Spec A[t]/(t^{m+1})\xrightarrow{\gamma_m} X$, where $a^*$ corresponds to the $A$--algebra homomorphism $A[t]/(t^{m+1})\rightarrow A[t]/(t^{m+1})$ mapping $t$ to $at$. One can check that the image of the zero section $\sigma_m$ is equal to $\Gamma_m(\{0\}\times (W/S)_m)$.

\begin{lemma}\label{semicont}
Let $f:W\rightarrow S$ be a family of schemes and $\tau: S\rightarrow W$ a section of $f$. For every $m\geq 1,$ the function
$$d(s)=\dim(\pi_m^{W_s})^{-1}(\tau(s))$$
is upper semi-continuous on $S$.
\end{lemma}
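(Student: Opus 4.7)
The plan is to reduce the statement to the standard upper semi-continuity of fiber dimension for a morphism of finite type, combined with a contraction argument coming from the affine-line action $\Gamma_m$ on jet schemes. Since the problem is genuinely about the geometry of fibers of $(W/S)_m \to S$ landing over $\tau(S)$, I first want to package these fibers as the fibers of a single finite-type morphism.

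To that end, I form the closed subscheme
\[
Z := \pi_m^{-1}(\tau(S)) \;\subseteq\; (W/S)_m,
\]
equivalently $Z = S \times_W (W/S)_m$ using $\tau$ and $\pi_m$. The morphism $g: Z \to S$ is of finite type, and by the base-change compatibility $(W_s)_m \cong (W/S)_m \times_S \{s\}$ recorded earlier in the section, the fiber of $g$ over $s$ is exactly
\[
Z_s \;=\; (\pi_m^{W_s})^{-1}(\tau(s)),
\]
so $d(s) = \dim Z_s$. Moreover, the zero-section composed with $\tau$ yields a section $\tau_m := \sigma_m\circ\tau : S \to Z$, which at $s$ picks out the constant jet at $\tau(s)$.

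Next, I use the $\AAA^1_S$-action $\Gamma_m$ to show that $\tau_m(s)$ lies on every irreducible component of $Z_s$, so that $\dim Z_s = \dim_{\tau_m(s)} Z_s$. The action preserves the base point of any jet, so it restricts to an action on $Z_s$; and by the very definition of $\Gamma_m$, one has $\Gamma_m(0,z) = \sigma_m(\pi_m(z)) = \tau_m(s)$ for every $z \in Z_s$. Thus for any irreducible component $C$ of $Z_s$ with generic point $\eta$, the morphism $\AAA^1 \to Z_s$, $a \mapsto \Gamma_m(a,\eta)$, has irreducible image that hits $\eta$ at $a = 1$ (hence is contained in $C$) and has $\tau_m(s)$ in its closure at $a = 0$; so $\tau_m(s) \in C$. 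This gives the desired equality of local and global fiber dimension at $\tau_m(s)$.

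Finally, I invoke the standard semi-continuity theorem (\textit{EGA} IV, 13.1.5): for the finite-type morphism $g: Z \to S$, the function $z \mapsto \dim_z Z_{g(z)}$ is upper semi-continuous on $Z$. Pulling back along the morphism $\tau_m : S \to Z$ gives an upper semi-continuous function on $S$ whose value at $s$ is $\dim_{\tau_m(s)} Z_s = \dim Z_s = d(s)$, proving the lemma. The main non-formal step is the $\AAA^1$-contraction argument in the previous paragraph, i.e.\ verifying that every irreducible component of $Z_s$ meets the zero-jet $\tau_m(s)$; once that is in place, the remainder is routine base-change bookkeeping and the standard fiber-dimension theorem.
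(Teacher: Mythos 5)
Your proof is correct, and it takes a genuinely different route from the paper's. Both arguments ultimately exploit the same geometric fact---that the fiber $(\pi_m^{W_s})^{-1}(\tau(s))$ is a ``cone'' contracting to the constant jet $\sigma_m(\tau(s))$---but they package it differently. The paper first reduces to the affine case, chooses coordinates so that $\tau$ is the zero section of an embedding $W\hookrightarrow\AAA^n_S$, observes that the defining equations $g_{l,p}$ of $(W/S)_m$ are then homogeneous for the weighting $\deg a_{i,j}=j$, concludes that $(\pi_m^W)^{-1}(\tau(S))=\Spec T$ for a graded $A$-algebra $T$ with $T_0=A$, and invokes the fiber-dimension semi-continuity theorem for the associated projective morphism $\Proj T\to S$. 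You instead work globally with $Z=\pi_m^{-1}(\tau(S))\to S$, use the $\AAA^1_S$-action $\Gamma_m$ (together with the identity $\Gamma_m(0,z)=\sigma_m(\pi_m(z))$ already recorded in the paper) to show that the point $\tau_m(s)=\sigma_m(\tau(s))$ lies on every irreducible component of $Z_s$, and then pull back Chevalley's semi-continuity of the local fiber dimension $z\mapsto\dim_z Z_{g(z)}$ along the section $\tau_m:S\to Z$. The two approaches are essentially dual descriptions of the same cone structure (graded ring versus $\AAA^1$-contraction), but yours avoids the affine reduction and the coordinate choices, replacing the projective semi-continuity theorem by the source-side Chevalley theorem; the paper's version makes the cone explicit at the cost of some bookkeeping. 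One small technical point you glide over: $\tau(S)$ need only be locally closed in $W$ (a section is a closed immersion only when $W\to S$ is separated, which is not assumed), but since you really form $Z$ as the fiber product $S\times_W (W/S)_m$ this causes no trouble, and your minor citation should be EGA IV, 13.1.3 rather than 13.1.5.
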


\begin{proof}
Due to the local nature of the assertion, we may assume that $S=\Spec A$ is an affine scheme. Given a point $s\in S$, we denote by $w=\tau(s)$ in $W$. Let $W'$ be an open affine neighborhood of $w$ in $W$.  Consider the restriction map $f': W'\rightarrow S$ of $f$,  one can show that there is an nonzero element $h\in A$ such that such that $\tau$ maps the affine neighborhood $S'\cong \Spec A_h$ of $s$ into $W'$. Let $W''$ be the affine neighborhood $(f')^{-1}(S')$ of $w$ and $f'':W''\rightarrow S'$ the induced map. The restriction of $\tau$ defines a section $\tau': S'\rightarrow W''$. Replacing $f$ by $f''$ and $\tau$ by $\tau'$, we may and will assume that both $W$ and $S$ are affine schemes. Let $W=\Spec B$, where $B$ is a finitely generated $A$--algebra. The section $\tau$ induce a ring homomorphism $\tau^*: B\twoheadrightarrow A$. Choose $A$--algebra generators $u_i$ of $B$ such that $\tau^*(u_i)=0$.
Let $C$ be the polynomial ring $A[x_1,\ldots, x_n]$. We define a ring homomorphism $\varphi:C\rightarrow B$ which maps $x_i$ to $u_i$ for every $i$. Let $I=(f_1,\ldots,f_r)$ be the kernel of $\varphi$. One can check that $f_l\in (x_1, \ldots, x_n)$ for every $l$ with $1\leq l\leq r$. Hence $W$ is a closed subscheme of $\AAA_S^n=\Spec A[x_1,\ldots, x_n]$ defined by the system of polynomials $(f_{l})$ and the zero section $o: S\rightarrow \AAA_S^n$ factors through $\tau$. It is clear that $(\AAA^n_S)_m=\Spec A[a_{i,j}]\cong \AAA^{n(m+1)}_S$ for $1\leq i\leq n$ and $0\leq j\leq m$ and $\sigma_m^{\AAA_S^n}\circ o: S\rightarrow \AAA^{n(m+1)}_S$ is the zero-section.

We thus obtain an embedding $(W/S)_m\subset\AAA_S^{(m+1)n}$ which induces an embedding $(\pi_m^W)^{-1}(\tau(S))\subset (\pi_m^{\AAA_S^n})^{-1}(o(S))\cong\AAA_S^{mn}=
%$such that $\sigma_m^W\circ \tau$ corresponding to the zero-section of $\AAA_S^{mn}=
\Spec A[a_{i,j}]$ for $1\leq i\leq n$ and $1\leq j\leq m$. Recall that $(W/S)_m$ as a subscheme of $\AAA^{n(m+1)}_S$ is defined by the polynomials $g_{l,p}$ in equation (\ref{affinejetseqn}). Let $\deg a_{i,j}=j$ for $1\leq i\leq n$ and $1\leq j\leq m$. Since $f_{l}$ has no constant terms, we can check that each $g_{l,p}$ is homogenous of degree $p$. We deduce that the coordinate ring of $(\pi_m^W)^{-1}(\tau(S))$, denoted by $T$, is isomorphic to $A[a_{i,j}]/(g_{l,p})$. Hence $T$ is a graded $A$--algebra.

For every $s\in S$ corresponding to a prime ideal $\mathfrak{p}$ of $A$, we obtain that
$$d(s)=\dim(\pi_m^{W_s})^{-1}(\tau(s))=\dim(T\otimes_{A} A/\mathfrak{p}).$$
Our assertion follows from a semi-continuity result on the dimension of fibers of a projective morphism (see \cite[Theorem 14.8]{Eis}).
\end{proof}

\begin{remark}\label{closed}
Let $X$ be a smooth variety over a field $k$ and $Y$ a closed subscheme of $X$. If $T$ is an irreducible component of $Y_m$ for some $m$, then $T$ is invariant under the action of $\AAA^1$. Since $\pi_m(T)=\sigma_m^{-1}(T\cap \sigma_m(X))$, it follows that $\pi_m(T)$ is closed in $X$.
\end{remark}

We now review some definitions in the theory of singularities of pairs $(X,Y)$. We refer the reader to \cite[Section 2.3]{KM} for a more detailed introduction. From now on, we always assume varieties are $\QQ$-Gorenstein. Suppose $X'$ is a normal variety over $k$ and $f: X' \rightarrow X$ is a birational (not necessarily proper) map. Let $E$ be a prime divisor on $X'$. Any such $E$ is called a divisor \emph {over} $X$. The local ring $\cO_{X',E}\subset k(X')$ is a DVR which corresponds to a divisorial valuation $\ord_{E}$ on $k(X)=k(X')$. The closure of $f(E)$ in $X$ is called the {\it center} of $E$, denoted by $c_X(E)$. If $f': X''\rightarrow X$ is another birational morphism and $F\subset X''$ is a prime divisor such that $\ord_E=\ord_F$ as valuations of $k(X)$, then we consider $E$ and $F$ to define the same divisor over $X$.

Let $E$ be a prime divisor over $X$ as above. If $Z$ is a closed subscheme of $X$, then we define $\ord_E(Z)$ as follows. We may assume that $E$ is a divisor on $X'$ and that the scheme-theoretic inverse image $f^{-1}(Z)$ is an effective Cartier divisor on $X'$. Then $\ord_E(Z)$ is the coefficient of $E$ in $f^{-1}(Z)$. Recall that the {\it relative canonical divisor} $K_{X'/X}$ is the unique $\QQ$--diviosr supported on the exceptional locus of $f$ such that $mK_{X'/X}$ is linearly equivalent with $mK_{X'}-f^*(mK_X)$ for some $m$. When $X$ is smooth, we can alternatively describe $K_{X'/X}$ as follows. Let $U$ be a smooth open subset of $X'$ such that $U\cap E\neq \emptyset$. The restriction of $f$ to $U$ is a birational morphism of smooth varieties, we denote it by $g$. In this case, the relative canonical divisor $K_{U/X}$ is the effective Cartier divisor defined by $\det(dg)$ on $U$.

We also define $\ord_E(K_{-/X})$ as the coefficient of $E$ in $K_{U/X}$. Note that both $\ord_E(Y)$ and $\ord_E(K_{-/X})$ do not depend on the particular choice of $f$, $X'$ and $U$.

For every real number $c>0$, the {\it log discrepancy} of the pair $(X,cY)$ with respect to $E$ is
\begin{equation*}
a(E;X,cY):=\ord_E(K_{-/X})+1-c\cdot\ord_E{Y}.
\end{equation*}

Let $Z$ be a closed subset of $X$. A pair $(X,cY)$ is {\it Kawamata log terminal} (klt for short) around $Z$ if $a(E;X,cY)>0$ for every divisor $E$ over $X$ such that $c_X(E)\cap Z\neq \emptyset$.

The log canonical threshold of $(X,Y)$ at $Z$, denoted by $\lct_Z(X, Y)$, is defined as follows: if $Y=X$, we set $\lct_Z(X,Y)=0$, otherwise
\begin{equation*}
\lct_Z(X,Y)=\sup\{c\in \RR_{\geq 0}~| ~(X, cY) ~\text{is klt around } Z\}.
\end{equation*}
In particular, $\lct_Z(X,Y)=\infty$ if and only if $Z\cap Y=\emptyset$. If $Z=X$, we simply write $\lct(X,Y)$ for $\lct_Z(X,Y)$.

By the definition of $a(E;X,Y)$, we obtain that
\begin{eqnarray*}
\lct_Z(X,Y)&=&\sup\left\{c\in \RR ~|~ c\cdot\ord_E(Y)< \ord_E(K_{-/X})+1 \text{~for ~every}~E\right\}\\
      &=&\inf\limits_{E/X}\frac{\ord_E(K_{-/X})+1}{\ord_E{Y}}
\end{eqnarray*}
where $E$ varies over all divisors over $X$ such that $c_X(E)\cap Z\neq \emptyset$.

\begin{remark}
The definition of log canonical threshold involves all exceptional divisors over $X$. In characteristic zero, it is enough to only consider the divisors on a log resolution, see \cite[Proposition 8.5]{Kol}. In particular, we deduce that $\lct_Z(X,Y)$ is a positive rational number. In positive characteristics, it is not clear that $\lct_Z(X,Y)>0$. We will see in $\S 3$ as a corollary of inversion of adjunction that we have, as in characteristic zero, $\lct_x(X,Y)\geq 1/{\ord_x(Y)}>0$, for every point $x\in Y$. Here $\ord_x(Y)$ is the maximal integer value $q$ such that the ideal $I_{Y,x}\subseteq m^q_{X,x}$.
\end{remark}

\section{Arc Spaces and Contact Loci}
We now turn to the projective limit of jet schemes. It follows from the description in the proof of Proposition \ref{existofJS} that the projective system
 \begin{equation*}
 \cdots \rightarrow X_m\rightarrow X_{m-1}\rightarrow \cdots \rightarrow X_0
 \end{equation*}
consists of affine morphisms. Hence the projective limit exists in the category of schemes over $k$. This is called the \emph {space of arcs} of $X$, denoted by $X_{\infty}$. Note that in general, it is not of finite type over $k$. There are natural projection morphisms $\psi_m: X_{\infty}\rightarrow X_m$. It follows from the projective limit definition and the functorial description of the jet schemes that for every $k$--field extension $A$ we have
\begin{equation*}
\Hom(\Spec(A), X_{\infty})\simeq
\underleftarrow\Hom(\Spec\,A[t]/(t^{m+1}), X)
\simeq\Hom(\Spec\, A\llb t\rrb,X)
\end{equation*}
In particular, for every field extension $L$ of $k$, an $L$--valued point of
$X_{\infty}$, called an $L${\it--valued arc}, corresponds to a morphism from $\Spec\,L\llb t\rrb$ to $X$. We denote the closed point of $\Spec L\llb t\rrb$ by $0$ and by $\eta$ the generic point. A point in $X_{\infty}$ is called an \emph{arc in $X$}. If $\gamma$ is a point in $X_{\infty}$ with residue field $\kappa$, $\gamma$ induces a $\kappa$--valued arc, i.e. a morphism $\gamma_{\kappa}: \Spec\,\kappa\llb t\rrb\rightarrow X$.
If $f\colon X\to Y$ is a morphism of schemes of finite type, by taking the
projective limit of the morphisms $f_m:X_m\rightarrow Y_m$ we get a morphism
$f_{\infty}\colon X_{\infty}\to Y_{\infty}$.

For every scheme $X$, a \emph{cylinder} in $X_{\infty}$ is a subset of the form $C=\psi_m^{-1}(S)$, for some $m$ and some
constructible subset $S\subseteq X_m$. If $X$ is a smooth variety of pure dimension $n$ over $k$, then all truncation maps $\rho^m_{m-1}$ are locally trivial with fiber $\AAA^n$. In particular, all projections $\psi_m: X_{\infty}\rightarrow X_m$ are surjective and $\dim X_m=(m+1)n$.

From now on, we will assume that $X$ is smooth and of pure dimension $n$. We say that a cylinder $C=\psi_m^{-1}(S)$ is \emph{irreducible (closed, open, locally closed)} if so is $S$. It is clear that all these properties of $C$ do not depend on the particular choice of $m$ and $S$. We define the {\it codimension of} $C$ by
$$\codim C: =\codim(S,X_m)=(m+1)n-\dim S.$$
Since the truncation maps are locally trivial, $\codim C$ is independent of the particular choice of $m$ and $S$.

For a closed subscheme $Z$ of a scheme $X$ defined by the ideal sheaf $\fra$ and for an $L$--valued arc $\gamma: \Spec L \llb t \rrb\rightarrow X$, the inverse image of $Z$ by $\gamma$ is defined by a principal ideal in $L \llb t \rrb$. If this ideal is generated by $t^e$ with $e\geq 0$, then we define the vanishing order of $\gamma$ along $Z$ to be $\ord_{\gamma}(Z)=e$. On the other hand, if this is the zero ideal, we put $\ord_{\gamma}(Z)=\infty$. If $\gamma$ is a point in $X_{\infty}$ with residue field $L$, then we define $\ord_{\gamma}(Z)$ by considering the corresponding morphism $\Spec L \llb t\rrb \rightarrow X$.
The \emph {contact locus of order} $e$ with $Z$ is the subset of $X_{\infty}$
$${\rm Cont}^e(Z)={\rm Cont}^e(\fra):=\{\gamma\in
X_{\infty}\mid\ord_{\gamma}(Z)=e\}.$$
We similarly define $$\cont^{\geq e}(Z)={\rm Cont}^{\geq e}(\fra):=\{\gamma\in
X_{\infty}\mid\ord_{\gamma}(Z)\geq e \}.$$
For $m\geq e$, we can define constructible subsets ${\rm Cont}^e(Z)_m$ and ${\rm Cont}^{\geq e}(Z)_m$ of $X_m$ in the obvious way. (In fact, the former one is locally closed, while the latter one is closed.) By definition, we have
\begin{equation*}
{\rm Cont}^e(Z)=\psi^{-1}_m({\rm Cont}^e(Z)_m) ~\text{and}~ {\rm Cont}^{\geq e}(Z)=\psi^{-1}_m({\rm Cont}^{\geq e}(Z)_m).
\end{equation*}
This implies that ${\rm Cont}^{\geq e}(Z)$ is a closed cylinder and ${\rm Cont}^e(Z)$  is a locally closed cylinder in $X_\infty$.

\section{cylinder valuations and divisorial valuations}
The main goal of this section is to establish the correspondence between cylinders and divisorial valuations described in the introduction. Let $X$ be a variety over a field $k$. Recall that a subset $C$ of $X_{\infty}$ is {\it thin} if there is a proper closed subscheme $Z$ of $X$ such that $C\subset Z_{\infty}$.
\begin{lemma}\label{notthin}
Let $X$ be a smooth variety over $k$. If $C$ is a nonempty cylinder in $X_{\infty}$, then $C$ is not thin.
\end{lemma}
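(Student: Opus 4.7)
My plan is to argue by contradiction via a dimension count. Suppose $C=\psi_m^{-1}(S)$ is nonempty and contained in $Z_\infty$ for some proper closed subscheme $Z\subsetneq X$, with $S\subseteq X_m$ nonempty constructible. Since $L\llb t\rrb$ is a domain for every field extension $L/k$, any $L$-valued arc $\Spec L\llb t\rrb\to X$ factors through $Z$ iff it factors through $Z_{\mathrm{red}}$, so I may replace $Z$ by $Z_{\mathrm{red}}$ and assume $Z$ is reduced; since $X$ is an integral variety of dimension $n$, this forces $\dim Z\le n-1$.

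For every $m'\ge m$ I write $C=\psi_{m'}^{-1}(S_{m'})$ with $S_{m'}:=(\rho_m^{m'})^{-1}(S)$. Since $X$ is smooth of dimension $n$, the truncation $\rho_m^{m'}$ is Zariski-locally trivial with fiber $\AAA^{n(m'-m)}$, so $\dim S_{m'}=\dim S+n(m'-m)$. On the other hand, $C\subseteq Z_\infty$ together with $\psi_{m'}(Z_\infty)\subseteq Z_{m'}$ forces $S_{m'}\subseteq Z_{m'}$ inside $X_{m'}$, giving $\dim S_{m'}\le\dim Z_{m'}$. Combining these with the standard dimension bound $\dim Z_{m'}\le (n-1)(m'+1)$ for reduced closed subschemes $Z\subsetneq X$ yields $\dim S+n(m'-m)\le (n-1)(m'+1)$ for every $m'\ge m$, i.e.\ $m'\le nm+(n-1)-\dim S$, contradicting the fact that $m'$ can be taken arbitrarily large.

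The main obstacle is verifying the dimension bound $\dim Z_{m'}\le (n-1)(m'+1)$, equivalently $\codim(Z_{m'},X_{m'})\ge m'+1$. I would reduce to the case of a reduced hypersurface: working locally on $X$ and using that smooth local rings are UFDs, pick a nonzero square-free regular function $f$ vanishing on $Z$---take any nonzero element of the radical ideal of $Z$ and replace it by its square-free part---so that $Z_{m'}\subseteq V(f)_{m'}$. The subscheme $V(f)_{m'}\subseteq X_{m'}$ is cut out by the $m'+1$ coefficient equations $[t^p]f(\gamma(t))=0$ for $0\le p\le m'$; at any jet centered at a smooth point of $V(f)$, the coefficient of the ``new'' variable $a_{i,p}$ in the $p$-th equation equals $\partial_i f(\gamma(0))$, nonzero for some $i$, so their Jacobian has full rank $m'+1$ and they form a regular sequence on a dense open of $X_{m'}$, giving the codimension bound on the dominant component. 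A separate induction on the dimension of the singular locus of $V(f)$ then handles jets centered over that locus and completes the global bound.
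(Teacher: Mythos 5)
There is a genuine gap: the claimed dimension bound $\dim Z_{m'}\le (n-1)(m'+1)$ for a reduced proper closed $Z\subsetneq X$ is \emph{false} in general, and it is precisely the ``separate induction on the singular locus'' that cannot be made to work. Take $X=\AAA^2$ and $Z=V(x^2-y^3)$, a reduced irreducible plane curve (so $n=2$, $\dim Z=1$). At level $m=5$ the locus of $5$-jets centered at the origin and lying on $Z_5\subset X_5=\AAA^{12}$ is cut out by $a_0=a_1=a_2=0$, $b_0=b_1=0$, with $a_3,a_4,a_5,b_2,b_3,b_4,b_5$ free, hence has dimension $7>6=(n-1)(m+1)$. (This is consistent with $\lct(\AAA^2,Z)=5/6$ and $\codim(Z_5,X_5)=5$.) So the jets centered over the singular locus are exactly what pushes $\dim Z_m$ above $(n-1)(m+1)$; an induction ``handling jets centered over the singular locus'' cannot restore the bound, because those jets are the ones that violate it. In fact the inequality $\codim(Z_m,X_m)\ge m+1$ for a reduced hypersurface $Z$ is equivalent to $\lct(X,Z)\ge 1$, i.e.\ to $(X,Z)$ being log canonical, and fails for any non-lc singular hypersurface.

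The quantity you actually need to control is not $\dim Z_{m'}$ but $\dim\psi_{m'}(Z_\infty)$, the dimension of the locus of $m'$-jets that \emph{lift to arcs on $Z$}; these two can differ drastically (in the cusp example $\dim\psi_5(Z_\infty)=6$ while $\dim Z_5=7$, since the $7$-dimensional family of jets over the origin does not lift to arcs on $Z$). The bound $\dim\psi_{m'}(Z_\infty)\le(\dim Z)(m'+1)$ does hold, but it is a nontrivial theorem about arc spaces (essentially Greenberg/Denef--Loeser), and your regular-sequence argument at smooth points does not yield it: it controls $Z_{m'}$ over the smooth locus only, and says nothing about which jets over the singular locus are truncations of arcs. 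The paper instead simply invokes \cite{ELM}[Proposition 1]. A short characteristic-free replacement for your step is to pass to the generic point $\gamma$ of an irreducible component of $C=\psi_m^{-1}(S)$: since $X$ is smooth, each truncation $\rho^{m'}_m$ is a locally trivial $\AAA^{n(m'-m)}$-bundle, so the residue field of $\psi_{m'}(\gamma)$ is purely transcendental over that of $\psi_m(\gamma)$ with the ``new'' jet coordinates $a_{i,j}$ ($m<j\le m'$) algebraically independent. If $C\subseteq Z_\infty$ and $f\ne 0$ vanishes on $Z$, then $f(\gamma^*(x_1),\dots,\gamma^*(x_n))=0$ forces, for large $N$, a nontrivial polynomial relation among these independent coefficients (the coefficient of $t^N$ is a nonzero polynomial involving the high-order $a_{i,j}$ once $f\ne 0$), a contradiction. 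This avoids any dimension bound on $Z_{m'}$ or on $\psi_{m'}(Z_\infty)$ altogether.
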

For the proof of Lemma \ref{notthin}, see \cite{ELM}[Proposition 1].

\begin{lemma}\label{proper lift}
Let $f: X'\rightarrow X$ be a proper birational morphism of schemes over $k$. Let $Z$ be a closed subset of $X$ and $F=f^{-1}(Z)$. If $f$ is an isomorphism over $X\setminus Z$, then the restriction map of $f_{\infty}$
$$\phi: X'_{\infty}\setminus F_{\infty}\rightarrow X_{\infty}\setminus Z_{\infty}$$
is bijective on the $L$--valued points for every field extension $L$ of $k$. In particular, $\phi$ is surjective.
\end{lemma}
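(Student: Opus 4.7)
The plan is to reduce the claim to the valuative criterion of properness applied to $f$. An $L$-valued point of $X_{\infty}\setminus Z_{\infty}$ is an arc $\gamma\colon \Spec L\llb t\rrb\to X$ whose image is not contained in $Z$; since $\Spec L\llb t\rrb$ has only two points (the generic point $\eta$ and the closed point $0$) and $Z$ is closed in $X$, this is equivalent to requiring $\gamma(\eta)\notin Z$. Similarly, an $L$-valued point of $X'_{\infty}\setminus F_{\infty}$ is an arc $\gamma'\colon \Spec L\llb t\rrb\to X'$ with $\gamma'(\eta)\notin F$. With these identifications, $\phi$ sends $\gamma'$ to $f\circ\gamma'$.

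For surjectivity, start with an arc $\gamma\colon \Spec L\llb t\rrb\to X$ such that $\gamma(\eta)\notin Z$. Because $f$ restricts to an isomorphism $X'\setminus F\xrightarrow{\sim}X\setminus Z$, the composition $\gamma|_{\Spec L((t))}$ lifts uniquely to a morphism $\gamma'_{\eta}\colon \Spec L((t))\to X'\setminus F$. Since $L\llb t\rrb$ is a DVR with fraction field $L((t))$ and $f$ is proper, the valuative criterion of properness produces a unique arc $\gamma'\colon \Spec L\llb t\rrb\to X'$ extending $\gamma'_{\eta}$ and satisfying $f\circ \gamma'=\gamma$. Because $\gamma'(\eta)=\gamma'_{\eta}(\eta)\notin F$, the arc $\gamma'$ lies in $X'_{\infty}\setminus F_{\infty}$ and maps to $\gamma$ under $\phi$.

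For injectivity, suppose $\gamma'_{1},\gamma'_{2}\in X'_{\infty}\setminus F_{\infty}$ both map to $\gamma$. Since $\gamma'_{i}(\eta)\notin F$ and $f(\gamma'_{i}(\eta))=\gamma(\eta)\notin Z$, the isomorphism $f\colon X'\setminus F\xrightarrow{\sim} X\setminus Z$ forces $\gamma'_{1}$ and $\gamma'_{2}$ to agree as $L((t))$-valued points of $X'$. Uniqueness in the valuative criterion then yields $\gamma'_{1}=\gamma'_{2}$, completing the argument.

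There is no serious obstacle; the only point to be careful about is confirming that the condition ``$\gamma\notin Z_{\infty}$'' is precisely the condition ``$\gamma(\eta)\notin Z$'' needed to feed the valuative criterion, which rests on the observation that $\Spec L\llb t\rrb$ consists of just the two points $\eta$ and $0$ and that $Z$ is closed.
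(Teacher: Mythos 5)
Your proof is correct and follows essentially the same route as the paper: surjectivity via the valuative criterion of properness applied to the lift of the generic fiber $\gamma_\eta$ (which lands in $X\setminus Z$ because $\gamma\notin Z_\infty$), and injectivity via the uniqueness part of the valuative criterion (i.e., separatedness of $f$). Your explicit remark that $\gamma\notin Z_\infty$ is equivalent to $\gamma(\eta)\notin Z$ makes a step the paper leaves implicit a bit more transparent, but the argument is the same.
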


\begin{proof}
Since $f$ is proper, the Valuative Criterion for properness implies that an arc $\gamma: \Spec L\llb t\rrb\rightarrow X$ lies in the image of $f_{\infty}$ if and only if the induced morphism $\gamma_{\eta}: \Spec L(\!(t)\!)\rightarrow X$ can be lifted to $X'$. $\gamma$ is not contained in $Z_{\infty}$ implies that $\gamma_{\eta}$ factor through $X\setminus Z\hookrightarrow X$. Since $f$ is an isomorphism over $X\setminus Z$, hence there is a unique lifting of $\gamma_{\eta}$ to $X'$. This shows that $\phi$ is surjective. The injectivity of $\phi$ follows from the Valuative Criterion for separatedness of $f$. The last assertion follows from the fact that a morphism of schemes (not necessary to be of finite type) over $k$ is surjective if the induced map on $L$--valued points is surjective for every field extension $L$.
\end{proof}

The Change of Variable Theorem due to Kontsevish \cite{Kon} and Denef and Loeser \cite{DL} will play an important role in our arguments. We now state a special case of this theorem as Lemma \ref{change}.

\begin{lemma}\label{change}
Let $X$ be a smooth variety of dimension $n$ over $k$ and $Z$ a smooth irreducible subvariety of codimension $c\geq 2$. Let $f: X'\rightarrow X$ be the blow up of $X$ along $Z$ and $E$ the exceptional divisor.
\begin{enumerate}
\item[(a)] For every positive integer $e$ and every $m\geq 2e$, the induced morphism $$\psi^{X'}_{m}(\cont^e(K_{X'/X}))\rightarrow f_m(\psi^{X'}_{m}(\cont^e(K_{X'/X})))$$ is a piecewise trivial $\AAA^e$ fibration.
\item[(b)] For every $m\geq 2e$, the fiber of $f_m$ over a point $\gamma_m\in f_m(\psi^{X'}_{m}({\cont}^e(K_{X'/X})))$  is contained in a fiber of $X'_m\rightarrow X'_{m-e}$.
\end{enumerate}
\end{lemma}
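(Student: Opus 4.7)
The plan is to prove both parts of the lemma by explicit computation in the standard affine charts of the blow-up. As a preliminary reduction, since the formation of jet schemes commutes with étale base change, I would pass to the étale-local model $X = \Spec k[x_1,\dots,x_n]$ with $Z = V(x_1,\dots,x_c)$. In this model $X'$ is covered by $c$ affine charts $U_1,\dots,U_c$: on $U_i$ we have coordinates $u_1^{(i)},\dots,u_n^{(i)}$ with $x_i = u_i^{(i)}$, $x_j = u_i^{(i)} u_j^{(i)}$ for $j\ne i$, $j\le c$, and $x_j = u_j^{(i)}$ for $j > c$, and $E\cap U_i = \{u_i^{(i)} = 0\}$. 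A direct Jacobian computation gives $\det df = (u_i^{(i)})^{c-1}$, hence $K_{X'/X} = (c-1)E$. In particular $\cont^e(K_{X'/X})$ is empty unless $(c-1) \mid e$; setting $e = (c-1)a$, an $m$-jet in the chart $U_i$ lies in $\psi_m^{X'}(\cont^e(K_{X'/X}))$ precisely when $\ord_t u_i^{(i)}(t) = a$.

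The core calculation is of the fiber of $f_m$ over a point $\gamma_m$ in its image. Given an $m$-jet $\gamma'_m$ in $U_i$ with $u_i^{(i)}(t) = t^a v(t)$, $v(0)\ne 0$, the image $\gamma_m$ has coordinates $x_i(t) = u_i^{(i)}(t)$, $x_j(t) = u_i^{(i)}(t)\,u_j^{(i)}(t)$ for $j\ne i$, $j\le c$, and $x_j(t) = u_j^{(i)}(t)$ otherwise. To recover a lift $\tilde\gamma'_m$ in $U_i$: the series $\tilde u_i^{(i)}$ and $\tilde u_j^{(i)}$ for $j > c$ are forced uniquely modulo $t^{m+1}$, while for the remaining $c-1$ indices $j$ the equation
\[
t^a v(t)\cdot \tilde u_j^{(i)}(t) \equiv x_j(t) \pmod{t^{m+1}}
\]
determines $\tilde u_j^{(i)}(t)$ only modulo $t^{m+1-a}$, leaving the top $a$ coefficients entirely free. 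This yields $(c-1)a = e$ free parameters and identifies the fiber with $\AAA^e$. The hypothesis $m \geq 2e$ ensures these coefficient positions all lie in the valid range and that the value of $\ord_E$ is actually detected at the level of the $m$-jet.

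For part (a), I would stratify $\psi_m^{X'}(\cont^e(K_{X'/X}))$ according to which chart $U_i$ contains the center of the jet (since $a = e/(c-1)$ is determined by $e$ alone, no further stratification by $a$ is needed). On each stratum the $e$ free parameters depend linearly on the image point $\gamma_m$, so the coordinate description above provides an explicit trivialization of the restriction of $f_m$ as an $\AAA^e$-bundle over its image. Part (b) then falls out of the same local picture: two lifts of a common $\gamma_m$ differ only in the top $a$ coefficients of each $\tilde u_j^{(i)}$, hence agree modulo $t^{m+1-a}$. Because $a \le (c-1)a = e$ when $c \ge 2$, they agree modulo $t^{m+1-e}$, so they have identical $(m-e)$-truncation, and the fiber of $f_m$ is therefore contained in a single fiber of $X'_m \to X'_{m-e}$.

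The main obstacle is bookkeeping: one must verify that the chart-wise stratification is well-defined (each $m$-jet in $\cont^e(K_{X'/X})_m$ is placed into a unique stratum) and constructible, and that the chart-wise trivializations glue into a single piecewise trivial structure on the image rather than merely pointwise-affine fibers. Everything else is elementary coordinate manipulation; indeed the content of the Change of Variable formula in this blow-up setting is captured entirely by the Jacobian identity $K_{X'/X} = (c-1)E$ together with the fact that multiplication of truncated power series by a unit times $t^a$ has cokernel of length $a$, both of which are characteristic-free.
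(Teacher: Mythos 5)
The paper does not itself prove this lemma---it cites Blickle \cite[Theorem 3.3]{Bli}---so there is no internal proof to compare against. Your chart-based computation is the standard argument behind the Change of Variable formula for a smooth blow-up, and its mathematical content is correct: $\det(df) = (u_i^{(i)})^{c-1}$ gives $K_{X'/X} = (c-1)E$, the lifting equations $t^a v(t)\,\tilde u_j^{(i)} \equiv x_j \pmod{t^{m+1}}$ determine $\tilde u_j^{(i)}$ only modulo $t^{m+1-a}$ and hence leave $a$ free coefficients per index, totalling $(c-1)a = e$, and the $a\le e$ observation gives part (b) directly. A useful check you omit but which makes the argument airtight: if $\gamma_m$ lies in the image and $a = \min_{j\le c}\ord_t x_j$ is achieved at $i$, then \emph{every} preimage of $\gamma_m$ has its center in $U_i$ (since in any other chart $U_j$ the coordinate $u_i^{(j)} = x_i/x_j$ must be a unit), so the entire fiber of $f_m$ lives in one chart and every point of it has $\ord_t u_i^{(i)} = a$; this is what legitimizes both the fiber computation and the claim in (b) about the \emph{full} $f_m$-fiber.

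Two small imprecisions worth fixing. First, the étale base-change framing is unnecessary and slightly misleading: for a smooth center in a smooth variety you can take a Zariski-open $U$ on which $I_Z$ is generated by part of a regular system of parameters $x_1,\dots,x_n$; the chart formulas and the Jacobian computation only use that $dx_1,\dots,dx_n$ trivialize $\Omega_X|_U$, so no étale descent argument for piecewise triviality is needed. Second, "stratify according to which chart $U_i$ contains the center of the jet" is not a partition (a center on $E$ typically lies in several charts). The clean fix is to stratify the \emph{target} by the constructible data $\bigl(\{j\le c : \ord_t x_j = a\},\ \{j\le c : \ord_t x_j > a\}\bigr)$, or simply by the least index $i$ with $\ord_t x_i = a$; over each such locally closed stratum your explicit coefficient description gives a trivialization, and these strata are exactly what the phrase "piecewise trivial" requires. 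With those repairs your proposal is a correct, self-contained, characteristic-free proof, more explicit than the citation the paper relies on.
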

For the proof of Lemma \ref{change}, see \cite[Theorem 3.3]{Bli}.

Let $X$ be a smooth variety of dimension $n$ over $k$. For every irreducible cylinder $C$ which does not dominate $X$, we define a discrete valuation as follows. Let $\gamma$ be the generic point of $C$ with residue field $L$.  We thus have an induced ring homomorphism $\gamma^*: \mathcal{O}_{X,\gamma(0)}\rightarrow \Spec L\llb t\rrb$. Lemma \ref{notthin} implies that $\ker\gamma^*$ is zero. Hence $\gamma^*$ extends to an injective homomorphism $\gamma^*: k(X)\rightarrow L(\!(t)\!)$.
 We define a map
 $$\ord_C: k(X)^*\rightarrow \ZZ$$ by $\ord_C(f) := \ord_{\gamma}(f)=\ord_t(\gamma^*(f))$. If $C$ does not dominate $X$, then $\ord_C$ is a discrete valuation. If $C'$ is a dense subcylinder of $C$, then they define the same valuation. Given an element $f\in k(X)^*$, we can check that $\ord_{C}(f)=\ord_{\gamma'}(f)$ for general point $\gamma'$ in $C$.

From now on we assume that $k$ is a perfect field. We first prove that every valuation defined by a cylinder is a divisorial valuation.
\begin{lemma}\label{cyltoval}
 If $C$ is an irreducible closed cylinder in $X_{\infty}$ which does not dominate $X$, then there exist a divisor $E$ over $X$ and a positive integer $q$ such that
\begin{equation}\label{ctoe}
\ord_C=q\cdot \ord_E.
\end{equation}
Furthermore, we have $\codim(C)\geq q\cdot(1+\ord_E(K_{-/X}))$.
\end{lemma}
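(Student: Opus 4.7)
The plan is induction on $d = \codim C \geq 1$. Let $\gamma$ be the generic point of $C$ and $Z := \overline{\pi_\infty(\gamma)} \subsetneq X$; since $k$ is perfect, $Z$ has a dense smooth locus containing $\eta_Z$, so replacing $X$ by the open set $X \setminus (Z \setminus Z^{sm})$ we may assume $Z$ is smooth (this preserves $\codim C$, $\ord_C$, divisors over $X$, and their log discrepancies). If $\codim_X Z = 1$, take $E := Z$ and $q := \ord_\gamma(Z)$; by Lemma \ref{notthin}, $q$ is finite, and $\gamma(0) \in Z$ forces $q \geq 1$. A local calculation in terms of a defining equation for $Z$ at $\eta_Z$ yields $\ord_C = q\ord_E$, while $C \subseteq \cont^{\geq q}(Z)$ (of codimension $q$) gives $\codim C \geq q = q(1 + \ord_E(K_{-/X}))$. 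This also resolves the base case $d=1$, since $\codim_X Z \geq 2$ would force $C \subseteq \cont^{\geq 1}(Z)$ of codimension at least $2$.

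For the inductive step, suppose $d \geq 2$ and $c := \codim_X Z \geq 2$. Let $f: X' \to X$ be the blow-up of $X$ along $Z$ with exceptional divisor $E_0$; then $X'$ is smooth and $K_{X'/X} = (c-1)E_0$. By Lemma \ref{proper lift}, $\gamma$ lifts uniquely to $\tilde\gamma \in X'_\infty$, and $\gamma(0) \in Z$ gives $p := \ord_{\tilde\gamma}(E_0) = \ord_\gamma(Z) \geq 1$, hence $e := \ord_{\tilde\gamma}(K_{X'/X}) = (c-1)p \geq 1$. Choose $m \geq 2e$ with $C = (\psi_m^X)^{-1}(C_m)$ for $C_m := \overline{\{\gamma_m\}}$, and set $\tilde C_m := \overline{\{\tilde\gamma_m\}} \subseteq X'_m$ and $\tilde C := (\psi_m^{X'})^{-1}(\tilde C_m)$. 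Then $\tilde C$ is an irreducible closed cylinder not dominating $X'$, since its image lies in $E_0$. Because $\tilde\gamma_m \in \psi_m^{X'}(\cont^e(K_{X'/X}))$ and $f_m(\tilde\gamma_m) = \gamma_m$, Lemma \ref{change}(a) shows $f_m$ induces a piecewise trivial $\AAA^e$-fibration $\tilde C_m \to C_m$ over a dense open, so $\dim \tilde C_m = \dim C_m + e$ and $\codim \tilde C = d - e < d$. The generic point of $\tilde C$ maps dominantly to $\gamma$ and has $\ord(E_0) = p$ finite, hence lies in $X'_\infty \setminus (E_0)_\infty$; by Lemma \ref{proper lift}'s bijectivity on $L$-valued points it coincides scheme-theoretically with $\tilde\gamma$, so $\ord_{\tilde C} = \ord_{\tilde\gamma} = \ord_\gamma = \ord_C$.

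Applying the induction hypothesis to $\tilde C$ on $X'$ yields a divisor $E$ over $X'$ (hence over $X$) and an integer $q \geq 1$ with $\ord_{\tilde C} = q\ord_E$ and $\codim \tilde C \geq q(1 + \ord_E(K_{-/X'}))$. Thus $\ord_C = q\ord_E$, and evaluating this equality on $K_{X'/X}$ gives $e = q\ord_E(K_{X'/X})$. Using $\ord_E(K_{-/X}) = \ord_E(K_{-/X'}) + \ord_E(K_{X'/X})$,
\[
\codim C = \codim \tilde C + e \geq q\bigl(1 + \ord_E(K_{-/X'})\bigr) + q\ord_E(K_{X'/X}) = q\bigl(1 + \ord_E(K_{-/X})\bigr),
\]
closing the induction. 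The principal obstacle is the valuation equality $\ord_{\tilde C} = \ord_C$: the cylinder $\tilde C$ is defined via a finite-level truncation $\tilde C_m$, and one must use the bijectivity on $L$-valued points in Lemma \ref{proper lift} to identify its generic point with $\tilde\gamma$; every other step is a routine manipulation around Lemma \ref{change} and the definition of relative canonical divisors.
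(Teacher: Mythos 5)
Your proof is correct and takes essentially the same route as the paper: shrink so that the center $Z$ of $\ord_C$ is smooth, blow it up, lift $\gamma$ via Lemma \ref{proper lift}, and use the piecewise-trivial $\AAA^e$-fibration of Lemma \ref{change} to drop the codimension by $e=\ord_C(K_{X'/X})$, then iterate. Framing the iteration as an explicit induction on $\codim C$, and defining the lifted cylinder directly as $(\psi_m^{X'})^{-1}(\overline{\{\tilde\gamma_m\}})$ rather than as $(\psi_m^{X'})^{-1}(S')$ for a component $S'$ of $f_m^{-1}(S)$ containing the fiber over $\gamma_m$, are only cosmetic repackagings of the paper's argument (and both rely on the same implicit fact that $\tilde\gamma_m$ is the generic point of the $\AAA^e$-fiber over $\gamma_m$, which the paper also treats lightly).
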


\begin{proof}
We will prove that such divisor $E$ can be reached by a sequence of blow ups of smooth centers after shrinking to suitable open subsets. Let $(R,m)$ be the valuation ring associated to the valuation $\ord_C$. Suppose that $C$ is $\psi^{-1}_m(S)$ for some closed irreducible subset $S$ in $X_m$. Chevalley's Theorem implies that the image of the cylinder $C$ by the projection $\psi_0(C)=\pi_{m}(S)$ is a constructible set. We denote its closure in $X$ by $Z$. This is \emph{the center of} $\ord_C$. If $C$ is irreducible and does not dominate $X$, then $Z$ is a proper reduced irreducible subvariety of $X$. The generic smoothness theorem implies that there is an nonempty open subset $U$ of $X$ such that $U\cap Z$ is smooth. Since $U$ contains the the generic point of $Z$, then $C\cap U_{\infty}$ is an open dense subcylinder of $C$. Note that $U_{\infty}$ is an open subset of $X_{\infty}$, we have $\codim(C,X_{\infty})=\codim (C\cap U_\infty,U_{\infty})$ and $C\subseteq X_{\infty}$ and $C\cap U_{\infty}\subseteq U_{\infty}$ define the same valuation. This implies that we can replace $X$ by $U$ and $C$ by $C\cap U_{\infty}$. As a consequence, we may and will assume that $Z$ is a smooth subvariety of $X$.

If $Z$ is a prime divisor on $X$, then the local ring $\cO_{X,Z}$ is a discrete valuation ring of $k(X)$ with maximal ideal $m_{X,Z}$. Given two local rings $(A,p)$ and $(B,q)$ of $k(X)$, we denote by $(A,p)\preceq (B,q)$ if $A\subseteq B$ is a local inclusion, i.e. $p=q\cap A$. This defines a partial order on the set of local rings of $k(X)$. By the definition of $Z$, we deduce that $$(\cO_{X,Z},m_{X,Z}) \preceq(R,m).$$ Since every valuation ring is maximal with respect to the partial order $\preceq$, it follows that $\cO_{X,Z}$ is equal to the valuation ring $R$ of $\ord_C$, and $\ord_C=q\cdot \ord_Z$ for some integer $q>0$. Therefore we may take $X'=X$ and $E=Z$, in which case $\ord_E(K_{-/X})=0$. The equality $\ord_C{Z}=q\cdot\ord_Z{Z}=q$ implies that $C$ is a subcylinder of ${\rm Cont}^{\geq q}(E)$. Since $E$ is a smooth divisor, we obtain that $\codim {\rm Cont}^{\geq q}(E)=q$. This proves the inequality
$$\codim(C)\geq q\cdot(1+\ord_E(K_{-/X}))=q.$$

We now assume that $Z$ is not a divisor, i.e. $\codim Z\geq 2$.  Let $f: X'\rightarrow X$ be the blow up of $X$ along $Z$. We claim that there exists an irreducible closed cylinder $C'$ in $X'_{\infty}$ such that the morphism $f_{\infty}$ maps $C'$ into $C$ dominantly.

Let $e$ be the vanishing order $\ord_C(K_{X'/X})$. We can assume that $C=(\psi_m^X)^{-1}(S)$ for some closed irreducible subset $S$ in $X_m$ with $m\geq 2e$. The smoothness of $X$ implies that $C\setminus Z_{\infty}$ is a dense subset of $C$. Let $F=f^{-1}(Z)$ be the exceptional divisor on $X'$.
%It follows from the Valuative Criterion for the properness of map $f$ that $X_{\infty}\setminus Z_{\infty}$ is contained in the image of $f_{\infty}$.
It is clear that $f_{\infty}^{-1}(Z_{\infty})=F_{\infty}$.  We denote by
$$\phi: X'_{\infty}\setminus F_{\infty}\rightarrow X_{\infty}\setminus Z_{\infty}$$
the restriction of $f_{\infty}$. Let $\gamma$ be the generic point of $C$ and $L$ the residue field of $\gamma$. Hence $\gamma$ induces a morphism
$$\gamma_L: \Spec L\llb t\rrb\rightarrow X.$$ Lemma \ref{notthin} implies that $\gamma\in X_{\infty}\setminus Z_{\infty}$. By Lemma \ref{proper lift}, we deduce that $\psi$ is bijective on $L$--valued piont, hence there is a unique $L$--valued point of $X'_{\infty}$ mapping to $\gamma_L$ via $\phi$. We denote by $\gamma'$ its underlying point in $X'_{\infty}$. It is clear that $f_{\infty}(\gamma')=\gamma$. For simplicity we write $\gamma_m$ for $\psi_{m}^{X}(\gamma)$ and $\gamma'_m$ for $\psi_{m}^{X'}(\gamma')$. By Lemma \ref{change} part (a), we deduce that $f_m^{-1}(\gamma_m)$ is an affine space of dimension $e$ over the residue field of $\gamma_m$. Hence the image of $f_m^{-1}(\gamma_m)$ in $X'_{\infty}$, denoted by $T$, is irreducible. Since $\gamma_m$ is the generic point of $S$, there is a unique component of $f_m^{-1}(S)$ which contains $T$. Let $S'$ be this component and $C'$ the cylinder $(\psi^{X'}_{m})^{-1}(S')$ in $X'_{\infty}$. We now check that the closed irreducible cylinder $C'$ satisfies the above conditions. The fact $$f_m(\gamma'_m)=f_m\circ\psi^{X'}_{\infty}(\gamma')=\psi^{X}_m(\gamma)=\gamma_m$$
implies that $\gamma'_m\in T$. We deduce that $\gamma'\in C'$. It follows that $f_{\infty}$ maps $C'$ into $C$ dominantly.

The fact that the center of $\ord_C$ on $X$ is $Z$ implies that $\ord_C(F)>0$, hence $e=\ord_{C}(K_{X'/X})>0$. Lemma \ref{change} implies that $f_m: S'\rightarrow S$ is dominant with general fibers of dimensional $e$. We thus have $\dim S'=\dim S+e$, hence
\begin{equation*}
\codim C'=\dim X'_m-\dim S'=\dim X_m-(\dim S+e)=\codim C-e
\end{equation*}

We now set $X^{(0)}=X, X^{(1)}=X',C^{(0)}=C$ and $C^{(1)}=C'$. By the construction of $C'$, we deduce that $\ord_C$ and $\ord_{C'}$ are equal as valuations of $k(X)$. If the center of $\ord_{C'}$ on $X'$ is not a divisor, we blow up this center again (we may need to shrink $X'$ to make the center to be smooth). We now run the above argument for the variety $X^{(1)}$ and $C^{(1)}$ and obtain $X^{(2)}$ and $C^{(2)}$. Since every such blow up decreases the codimension of the cylinder, which is an non-negative integer, we deduce that after $s$ blow ups, the center of the valuation $\ord_{C^{(s)}}$ on $X^{(s)}$ is a divisor, denoted by $E$. We have $$\ord_C=\ord_{C^{(1)}}=\cdots=\ord_{C^{(s)}}=q\cdot\ord_E.$$ We now check the inequality $\codim C\geq q\cdot(1+\ord_E(K_{-/X}))$. At each step, we have
\begin{equation*}
\begin{array}{cl}
\codim(C)=\codim (C^{(1)})+\ord_C(K_{X^{(1)}/X})\\
\codim(C^{(1)})=\codim (C^{(2)})+\ord_C(K_{X^{(2)}/{X^{(1)}}})\\
\cdots \\
\codim(C^{(s-1)})=\codim (C^{(s)})+\ord_C(K_{X^{(s)}/{X^{(s-1)}}})
\end{array}
\end{equation*}

We thus obtain that
\begin{equation*}
\begin{array}{cl}
\codim (C)&=\codim (C^{(s)})+\sum\limits_{i=1}^{s} \ord_C(K_{X^{(i)}/{X^{(i-1)}}})\\
          &=\codim (C^{(s)})+\ord_C(K_{X^{(s)}/{X}})
\end{array}
\end{equation*}
It is clear that $\ord_C(E)=q \cdot \ord_{E}(E)=q$, hence $C^{(s)}\subseteq {\rm Cont}^{\geq q}(E)$, and therefore $\codim C^{(s)}\geq \codim {\rm Cont}^{\geq q}(E)=q$. This complete the proof.
\end{proof}

\begin{lemma}\label{closure of cylinder}
Let $X$ be a smooth variety and $S$ a constructible subset of $X_m$ for some $m$.
\begin{itemize}
\item[(a)] $\overline{\psi^{-1}_m(S)}=\psi^{-1}_m(\overline{S})$.
\item[(b)] If $U$ is an open subset of $X$ and $C$ is a cylinder in $U_{\infty}$, then the closure $\overline{C}$ in $X_{\infty}$ is a closed cylinder in $X_{\infty}$.
\end{itemize}
\end{lemma}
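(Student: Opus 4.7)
\emph{Proof plan.} For part (a), the inclusion $\overline{\psi_m^{-1}(S)}\subseteq\psi_m^{-1}(\overline{S})$ is automatic from continuity of $\psi_m$ together with closedness of $\psi_m^{-1}(\overline{S})$. The substantive inclusion is the reverse, and my strategy is to reduce it to the assertion that $\psi_m\colon X_\infty\to X_m$ is an open map. Since $X$ is smooth, each truncation $\rho^{m'}_{m'-1}$ is a locally trivial $\AAA^n$-bundle, hence flat, surjective, and open; consequently each composition $\rho^{m'}_m$ (for $m'\geq m$) is surjective and open, and each $\psi_{m'}$ is surjective. Every open subset of $X_\infty$ is a union of basic opens of the form $\psi_{m'}^{-1}(W)$ with $W\subseteq X_{m'}$ open and $m'\geq m$, and surjectivity of $\psi_{m'}$ gives $\psi_m(\psi_{m'}^{-1}(W))=\rho^{m'}_m(W)$, which is open. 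Hence $\psi_m$ is open. With this in hand, given $\gamma\in\psi_m^{-1}(\overline{S})$ and any open neighborhood $V$ of $\gamma$, the image $\psi_m(V)$ is an open subset of $X_m$ containing $\psi_m(\gamma)\in\overline{S}$, so $\psi_m(V)\cap S\neq\emptyset$; lifting a point of this intersection back to $V$ produces a point of $V\cap\psi_m^{-1}(S)$, whence $\gamma\in\overline{\psi_m^{-1}(S)}$.

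For part (b), the plan is to identify $C$ as a cylinder directly inside the ambient $X_\infty$ and then apply part (a). Write $C=(\psi_m^U)^{-1}(S)$ for a constructible $S\subseteq U_m$. By Lemma \ref{openembbeding}, the open immersion $U\hookrightarrow X$ yields $U_m\cong(\pi_m^X)^{-1}(U)$ as an open subscheme of $X_m$, so $S$ is a constructible subset of $X_m$ as well. Moreover, any arc $\gamma\colon\Spec L\llb t\rrb\to X$ whose $m$-jet lies in $U_m$ must send the closed point of $\Spec L\llb t\rrb$ into $U$; since $U$ is open in $X$, the preimage $\gamma^{-1}(U)$ is an open subset of $\Spec L\llb t\rrb$ containing the closed point, hence equal to the whole scheme, so $\gamma$ factors through $U$. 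This gives the equality $(\psi_m^X)^{-1}(S)=C$ as subsets of $X_\infty$, and part (a) applied in $X_\infty$ then yields $\overline{C}=(\psi_m^X)^{-1}(\overline{S}^{X_m})$. Since the Zariski closure of a constructible set is closed, hence constructible, this exhibits $\overline{C}$ as a closed cylinder in $X_\infty$.

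The main obstacle is verifying that $\psi_m$ is open in part (a); once this topological property of the inverse limit is established, both parts follow formally from continuity, surjectivity of the $\psi_{m'}$, and openness of the truncation maps.
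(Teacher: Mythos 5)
Your argument is correct, and for part~(a) it takes a genuinely different and cleaner route than the paper. The paper reduces to the affine case, writes $X_\infty=\Spec A$ with $A=\bigcup_m A_m$, and derives a contradiction by comparing the radical ideals of the two sides after pushing forward to a sufficiently large finite level $n$, using that $\rho^n_m$ is a locally trivial affine bundle so $\overline{(\rho^n_m)^{-1}(S)}=(\rho^n_m)^{-1}(\overline{S})$. You instead package the same ingredient (local triviality and surjectivity of the truncation maps) into the cleaner statement that $\psi_m\colon X_\infty\to X_m$ is an open map, and then the equality $\overline{\psi_m^{-1}(S)}=\psi_m^{-1}(\overline{S})$ is a purely point-set consequence: for a continuous open surjection $f$, $f^{-1}(\overline{S})\subseteq\overline{f^{-1}(S)}$ always holds. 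This buys a more transparent proof that avoids any explicit manipulation of coordinate rings, at the modest cost of invoking the description of the topology on the limit: your step that every open of $X_\infty$ is a union of sets $\psi_{m'}^{-1}(W)$ with $W\subseteq X_{m'}$ open is correct precisely because the transition maps are affine (in the affine case any $f\in A=\bigcup_m A_m$ lies in some $A_{m'}$, so $D(f)=\psi_{m'}^{-1}(D(f))$), which is the same finite-level structure the paper exploits in a different way. Your part~(b) follows the paper's route — identify $S$ with a constructible subset of $X_m$ via $U_m\cong(\pi_m^X)^{-1}(U)$ and apply part~(a) — but you add a useful justification (the observation that an arc whose $m$-jet lands in $U_m$ must factor through $U$, since the only open of $\Spec L\llb t\rrb$ containing the closed point is the whole space) for the identification $(\psi_m^X)^{-1}(S)=C$ that the paper passes over silently.
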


\begin{proof} We first prove part (a). Since $\psi_m$ is continuous with respect to the Zariski topologies, we deduce that $\psi^{-1}_m(\overline{S})$ is closed. We thus have $\overline{\psi^{-1}_m(S)}\subseteq\psi^{-1}_m(\overline{S})$. If $\overline{\psi^{-1}_m(S)}\neq\psi^{-1}_m(\overline{S})$, then there is an arc $\gamma \in \psi^{-1}_m(\overline{S})\setminus \overline{\psi^{-1}_m(S)}$. Let $U$ be an affine neighborhood of $\psi_0(\gamma)$ in $X$ and $W=S\cap U_m$. It is clear that $$\gamma \in (\psi^U_m)^{-1}(\overline{W})\setminus\overline{(\psi^U_m)^{-1}(W)}.$$ In order to get a contradiction, we can replace $X$ by $U$ and $S$ by $W$. We thus may assume that $X$ is an affine variety. It follows from the construction of jet schemes that $X_m$ are smooth affine varieties. Let $X_m=\Spec A_m$ for every $m\geq 0$. Hence $X_{\infty}=\Spec A$ where $A=\bigcup\limits_m A_m$. We claim that if $\overline{\psi^{-1}_m(S)}\neq\psi^{-1}_m(\overline{S})$, then there is an integer $n\geq m$ such that $$\overline{\psi_{n}(\overline{\psi^{-1}_m(S)})}\neq \psi_n(\psi^{-1}_m(\overline{S})).$$
Since $\psi_n(\psi^{-1}_m(\overline{S}))=(\rho^{n}_m)^{-1}(\overline{S})$ and $\psi_n(\psi^{-1}_m({S}))=(\rho^{n}_m)^{-1}({S})$, we deduce that $$\overline{(\rho^{n}_m)^{-1}({S})}=\overline{\psi_{n}(\psi^{-1}_m(S))}\subseteq \overline{\psi_{n}(\overline{\psi^{-1}_m(S)})}\subsetneq (\rho^{n}_m)^{-1}(\overline{S}).$$
On the other hand, since $\rho^n_m$ is a locally trivial affine bundle with fiber $\AAA^{\dim X(n-m)}$, we have $\overline{(\rho^{n}_m)^{-1}({S})}=(\rho^{n}_m)^{-1}(\overline{S})$. We thus get an contraction.

We now prove the claim. Let $I$ be the radical ideal defining $\overline{\psi^{-1}_m(S)}$ in $X_\infty$ and $J$ the radical ideal defining $\psi^{-1}_m(\overline{S})$. If $\overline{\psi^{-1}_m(S)}\neq\psi^{-1}_m(\overline{S})$, then there is an element $f\in I\setminus J$. There exist an integer $n\geq m$ such that $f\in A_n$. Let $I_n=I\cap A_n$ and $J_n=J\cap A_n$. It is clear that $\overline{\psi_n(\psi^{-1}_m(\overline{S}))}$ is the closed subset of $X_n$ defined by $J_n$. Similarly $\psi_n(\overline{\psi^{-1}_m(S)})=(\rho^n_m)^{-1}(\overline{S})$ is the closed subset of $X_n$ defined by the ideal $I_n$. Since $f\in I_n\setminus J_n$, we thus have the assertion of the claim. This completes the proof of part (a).

For the proof of part (b), let $C=(\psi^U_{m})^{-1}(S)$ for some integer $m\geq 0$ and some constructible subset $S$ of $U_m$. We now consider $S$ as a constructible subset of $X_m$ and apply part (a), we thus obtain $\overline{C}=\overline{\psi^{-1}_m(S)}=(\psi^X_m)^{-1}(\overline{S})$. This completes the proof.
\end{proof}

\begin{lemma}\label{bl}
Let $X$ and $X'$ be smooth varieties over a field $k$, and $f: X'\rightarrow X$ a blow up with smooth center. If $C'$ is a closed cylinder of $X'$, then the closure of the image $f_{\infty}(C')$, denoted by $C$, is a cylinder in $X'$. We also have
$$\ord_C=\ord_{C'};~\codim C=\codim C'+\ord_{C'}{K_{X'/X}}.$$
\end{lemma}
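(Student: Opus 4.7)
The plan is to identify $C$ explicitly as a cylinder $(\psi_m^X)^{-1}(T)$ at a sufficiently high level $m$, where $T:=\overline{f_m(S')}$ and $C'=(\psi_m^{X'})^{-1}(S')$. We may assume $C'$ is irreducible and does not dominate $X'$, so that $\ord_{C'}$ is a well-defined valuation; set $e:=\ord_{C'}(K_{X'/X})$. Writing $C'=(\psi_{m_0}^{X'})^{-1}(S'_0)$ for some irreducible closed $S'_0\subseteq X'_{m_0}$, I replace $m_0$ by any $m\geq\max(2e,m_0+e)$ and $S'_0$ by $S':=(\rho_{m_0}^m)^{-1}(S'_0)$. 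Then $C'=(\psi_m^{X'})^{-1}(S')$ still holds, and $S'$ is now \emph{saturated} over $X'_{m-e}$, i.e.\ $S'=(\rho_{m-e}^m)^{-1}(\rho_{m-e}^m(S'))$, since $m-e\geq m_0$. By Chevalley's theorem $f_m(S')$ is constructible, and $T=\overline{f_m(S')}$ is irreducible closed in $X_m$.

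The first key step is the dimension count $\dim T=\dim S'-e$. Because $f$ is the blow-up of $X$ along a smooth center $Z$ of some codimension $c$, one has $K_{X'/X}=(c-1)F$ where $F$ is the exceptional divisor, together with the identity $\ord_{\gamma'}(F)=\ord_{f(\gamma')}(Z)$ for every arc $\gamma'$. Applied to any jet in $f_m^{-1}(\eta_T)$, this forces $\ord(K_{X'/X})=e$ on the whole fiber, so $f_m^{-1}(\eta_T)\subseteq \psi_m^{X'}(\cont^e(K_{X'/X}))$, and by Lemma~\ref{change}(a) it equals an affine space $A\cong\AAA^e$. By Lemma~\ref{change}(b), $A$ lies in a single fiber of $\rho_{m-e}^m$; since $\eta_{S'}\in S'\cap A$ and $S'$ is saturated over $X'_{m-e}$, we deduce $A\subseteq S'$. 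Hence the generic fiber of $f_m|_{S'}$ over $\eta_T$ is exactly $A$, of dimension $e$, and the dimension formula for dominant morphisms yields $\dim T=\dim S'-e$, i.e.\ $\codim_{X_m}(T)=\codim_{X'_m}(S')+e$.

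To show $C=(\psi_m^X)^{-1}(T)$, the inclusion $C\subseteq(\psi_m^X)^{-1}(T)$ follows at once from $\psi_m^X\circ f_\infty=f_m\circ \psi_m^{X'}$. For the reverse, running the same dimension argument at every level $\ell\geq m$ (with $(\rho_m^\ell)^{-1}(S')$ in place of $S'$, which is still saturated over $X'_{\ell-e}$) gives $\overline{f_\ell(\psi_\ell^{X'}(C'))}=(\rho_m^\ell)^{-1}(T)$, since both sides are irreducible of the same dimension in $X_\ell$ with one contained in the other. Thus $f_\ell(\psi_\ell^{X'}(C'))$ is dense in $(\rho_m^\ell)^{-1}(T)$ for every $\ell\geq m$, and because every basic open of $X_\infty$ pulls back from some $X_\ell$, $f_\infty(C')$ is dense in $(\psi_m^X)^{-1}(T)$; combined with Lemma~\ref{closure of cylinder}(a) this yields $C=(\psi_m^X)^{-1}(T)$. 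Consequently $C$ is a closed cylinder and $\codim C=\codim_{X_m}(T)=\codim C'+e$.

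Finally, for the valuation equality, the generic point $\gamma'$ of the irreducible $C'$ has image $\gamma=f_\infty(\gamma')$ lying in every nonempty open of $C=\overline{f_\infty(C')}$, so $\gamma$ is the generic point of $C$. For any $g\in k(X)^*$, $\gamma^*(g)=(f\circ\gamma')^*(g)=\gamma'^*(f^*g)$, whence $\ord_C(g)=\ord_t(\gamma^*(g))=\ord_{C'}(f^*g)$, and $\ord_C=\ord_{C'}$ under the identification $k(X)\cong k(X')$ induced by $f^*$. The main obstacle will be the dimension computation in the second paragraph: it crucially requires the saturation of $S'$ over $X'_{m-e}$ (without which an extra constraint in $S'$ at the top $e$ levels can collapse the fiber of $f_m|_{S'}$ below dimension $e$, giving the wrong $\codim C$) together with the specific structure of blow-ups along smooth centers in order to identify $f_m^{-1}(\eta_T)$ with the predicted $\AAA^e$.
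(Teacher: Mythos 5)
Your proof is correct and follows the same broad strategy as the paper's: pass to a level $m\geq 2e$ at which $S'$ is saturated over $X'_{m-e}$ (the paper arranges this by writing $S'=(\rho^m_p)^{-1}(T')$ with $m=p+e$), use Lemma~\ref{change} to control the fibers of $f_m$, and conclude via Lemma~\ref{closure of cylinder}(a). The one substantive difference lies in how you identify $C$ with $(\psi^X_m)^{-1}(T)$: the paper deduces the set-theoretic identity $f_m^{-1}(f_m(S'))=S'$ directly from Lemma~\ref{change}(b) together with the saturation of $S'$, which at once gives $f_\infty(C')=\psi^{-1}_m(f_m(S'))$ and hence the cylinder structure of $C$ after closing up; you instead compute the fiber of $f_m|_{S'}$ only over the generic point $\eta_T$ of $T$ (getting $f_m^{-1}(\eta_T)=\AAA^e\subseteq S'$ and the codimension drop), and then upgrade to $C=(\psi^X_m)^{-1}(T)$ by running the irreducible-of-equal-dimension comparison at every truncation level $\ell\geq m$. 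Your route is longer but confines the use of Lemma~\ref{change}(b) to a single point of $f_m(\psi^{X'}_m(\cont^e(K_{X'/X})))$, exactly where that lemma is stated, and then propagates by density; the paper's version is more succinct. Both land on the same codimension formula, and your generic-point argument for $\ord_C=\ord_{C'}$ just unwinds what the paper states as ``$C'$ dominates $C$.''
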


\begin{proof}
Let $e=\ord_{C'}{K_{X'/X}}$.  For simplicity, we write $\psi'_m$ for $\psi^{X'}_m$ and $\psi_m$ for $\psi^X_m$ for every $m\geq 0$. We first show that $C$ is a closed cylinder. We choose an integer $p\geq e$ and a constructible subset $T'$ of $X'_p$ such that $C'=(\psi'_p)^{-1}(T')$. Let $m=e+p$. We denote by $S'$ the inverse image of $T'$ by the canonical projection $\rho^{m}_p: {X'}_m\rightarrow {X'}_p$. Let $S=f_m(S')$. Lemma \ref{change} part (b) implies that $f_m^{-1}(f_m(S'))\subseteq (\rho^m_{p})^{-1}(T')=S'$. We thus have $f_m^{-1}(f_m(S'))=S'$. It follows that $f_{\infty}(C')=\psi^{-1}_m(S)$. Hence
$$C=\overline{f_{\infty}(C')}=\overline{\psi^{-1}_m(S)}=\psi^{-1}_m(\overline{S})$$
is an irreducible closed cylinder in $X_{\infty}$. Here the last equality follows from Lemma \ref{closure of cylinder} part (a).
Since $C'$ dominates $C$, we have $\ord_C=\ord_{C'}$. The codimension equality follows from the fact that $\dim S'=\dim S+e$ by Lemma \ref{change}.
\end{proof}

\begin{lemma}\label{valtocyl}
Let $X$ be a smooth variety over a perfect field $k$. If $f: Y\rightarrow X$ is a birational morphism from a normal variety $Y$ and $E$ is a prime divisor, then for every positive integer $q$, there exist an irreducible cylinder $C\subset X_{\infty}$ such that $\ord_C=q\cdot\ord_E$ and
\begin{equation}\label{codim}
\codim(C)=q\cdot(1+\ord_E(K_{Y/X}))
\end{equation}
\end{lemma}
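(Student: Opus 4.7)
The plan is to realize the valuation $\ord_E$ as a smooth divisor on a tower of blow-ups over $X$, form the obvious contact locus cylinder on that model, and push it down to $X_\infty$ by iterating Lemma~\ref{bl}. The data we really need is the valuation $\ord_E$ on $k(X)$ together with the non-negative integer $a := \ord_E(K_{-/X})$; the particular model $Y$ only serves to compute these.

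First I would build, inductively, a sequence of open subsets and blow-ups
$$X_s \xrightarrow{f_s} X_{s-1} \xrightarrow{f_{s-1}} \cdots \xrightarrow{f_1} X_0 = X$$
with the following property: at each stage, after shrinking $X_i$ to a smooth open neighborhood of the generic point of the center $Z_i$ of $\ord_E$ on $X_i$ so that $Z_i$ becomes smooth, $f_{i+1}$ is the blow-up along $Z_i$ (performed only when $\codim Z_i \geq 2$). Because $K_{X_{i+1}/X_i} = (\codim Z_i - 1)\,F_{i+1}$ for the exceptional divisor $F_{i+1}$, and because the new center of $\ord_E$ lies in $F_{i+1}$ so that $\ord_E(F_{i+1})\geq 1$, each step contributes at least $1$ to $\ord_E(K_{X_s/X}) = \sum_{i=1}^s \ord_E(K_{X_i/X_{i-1}})$. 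Since the relative canonical divisor of a birational morphism between smooth varieties is effective, this running sum is bounded by $a$, so the process terminates with $Z_s$ a divisor $E_s$ (and after one more shrinking, a smooth one).

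On $X_s$ we have $\ord_E = q'\cdot\ord_{E_s}$ for a uniquely determined positive integer $q'$. I would set
$$C_s := \cont^{\geq qq'}(E_s) \subset (X_s)_{\infty},$$
which is an irreducible closed cylinder of codimension $qq'$ (smooth divisor on a smooth variety). Pushing forward repeatedly via Lemma~\ref{bl} and taking closures produces an irreducible closed cylinder $C\subset X_\infty$ with $\ord_C = \ord_{C_s} = q\cdot\ord_E$ and
$$\codim C = qq' + \sum_{i=1}^s \ord_{C_i}(K_{X_i/X_{i-1}}) = qq' + q\cdot\ord_E(K_{X_s/X}).$$
To finish, I would compare $\ord_E(K_{X_s/X})$ with the intrinsic invariant $a$: choosing a smooth open $U$ near the generic point of $E$ admitting morphisms to both $X$ and $X_s$ (guaranteed by the valuative criterion, since $X_s\to X$ is proper birational), the map $U\to X_s$ is a local uniformizer raised to the $q'$-th power along $E$, giving $\ord_E(K_{U/X_s}) = q'-1$ and hence $a = (q'-1) + \ord_E(K_{X_s/X})$. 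Substituting yields $\codim C = q(1+a)$, as required.

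The main obstacle, and the only nontrivial point, is the termination and correct setup of the inductive blow-up tower in Step 2: one has to verify both that the smoothing shrinkages at each stage are compatible with applying Lemma~\ref{bl} (which can be done since cylinders and their pushforwards behave well under shrinking via Lemma~\ref{closure of cylinder}(b)), and that the a priori bound $\ord_E(K_{X_s/X})\leq a$ really does force the algorithm to stop after finitely many steps independently of any choice of $Y$. Once the tower is in place, the cylinder construction and the codimension bookkeeping are a direct iteration of Lemma~\ref{bl} together with the divisorial description of the ramification of $U\to X_s$ along $E$.
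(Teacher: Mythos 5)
Your proof follows essentially the same strategy as the paper's: build a tower of blow-ups along the successive centers of $\ord_E$, take the contact cylinder $\cont^{\geq q}(E_s)$ at the top, and push it down to $X_\infty$ via Lemma~\ref{bl} and Lemma~\ref{closure of cylinder}(b), tracking codimensions with the chain rule for relative canonical divisors. The one place you genuinely diverge is the termination argument: the paper simply cites \cite{KM}[Lemma 2.45] (Zariski's local uniformization for divisorial valuations), which already gives both that the process stops and that $\ord_{E_s}=\ord_E$ on the nose. You instead prove termination directly by observing that each blow-up of a center of codimension $\geq 2$ raises $\ord_E(K_{X_i/X})$ by at least $1$, while $\ord_E(K_{X_i/X})\leq a=\ord_E(K_{Y/X})$ for all $i$ by factoring $K_{U/X}=K_{U/X_i}+g_i^*K_{X_i/X}$ on a smooth open $U$ near the generic point of $E$ in $Y$ and using effectivity of relative canonicals between smooth varieties. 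This self-contained argument is fine and is character-istic-independent, so it is a reasonable substitute for citing KM~2.45; it buys you a proof with one fewer black box, at the cost of an extra (but short) effectivity argument.

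One point deserves a warning. You allow for $\ord_E=q'\cdot\ord_{E_s}$ with $q'>1$ and then assert $\ord_E(K_{U/X_s})=q'-1$; that equality is the tame ramification formula and would be unreliable in characteristic $p$ when $p\mid q'$. Fortunately the issue never arises: since $E_s$ is a prime divisor on the smooth (hence normal) variety $X_s$, the local ring $\cO_{X_s,E_s}$ is a DVR dominated by the valuation ring of $\ord_E$, and by maximality of valuation rings under domination they coincide, so $\ord_{E_s}=\ord_E$ and $q'=1$ automatically. (This is also part of what \cite{KM}[Lemma 2.45] gives you for free.) With $q'=1$ the ramification term drops out, $\ord_E(K_{U/X_s})=0$ because $E$ is not $g$-exceptional, and the bookkeeping yields $\codim C=q\bigl(1+\ord_E(K_{Y/X})\bigr)$ as desired. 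You should state $q'=1$ rather than invoke the tame formula, which would otherwise be a gap in positive characteristic.
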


\begin{proof} Let $\nu$ be the divisorial valuation $q\cdot\ord_E$ on the function field $K(X)$. We define a sequence of varieties and maps as follows. Let $Z^{(0)}$ be the center of $\nu$ on $X$ and $X^{(0)}=X$. We choose an open subset $U^{(0)}$ of $X^{(0)}$ such that $Z^{(0)}\cap U^{(0)}$ is a nonempty smooth subvariety of $U^{(0)}$. If $Z^{(0)}\cap U^{(0)}$ is not a divisor, then let $f_1: X^{(1)}\rightarrow U^{(0)}$ be the blow up of $U^{(0)}$ along $Z^{(0)}\cap U^{(0)}$ and $h_1:X^{(1)}\rightarrow X$ the composition of $f_1$ with the embedding $U^{(0)}\hookrightarrow X$. If $f_i: X^{(i)}\rightarrow U^{(i-1)}$ and $h_i: X^{(i)}\rightarrow X^{(i-1)}$ are already defined, then we denote by $Z^{(i)}$ the center of $\nu$ on $X^{(i)}$. We pick an open subset $U^{(i)}\subset X^{(i)}$ such that $Z^{(i)}\cap U^{(i)}$ is a smooth subvariety of $U^{(i)}$. If $Z^{(i)}$ is not a divisor, then we denote by $f_{i+1}: X^{(i+1)}\rightarrow U^{(i)}$ the blow up of $U_i$ along $Z^{(i)}\cap U^{(i)}$ and $h_{i+1}: X^{(i+1)}\rightarrow X^{(i)}$ the composition of $f_{i+1}$ with the embedding $U^{(i)}\rightarrow X^{(i)}$. By \cite{KM}[Lemma 2.45], we know there is an integer $s\geq 0$ such that $Z^{(s)}$ is a prime divisor on $U^{(s)}$ and $\ord_{Z^{(s)}}=\ord_E$. Hence we can replace $Y$ by a smooth variety $U^{(s)}$ and $E=Z^{(s)}\cap U^{(s)}$. We write $g_i: Y\rightarrow X^{(i)}$ for the composition of morphisms $h_j$ for $j$ with $i<j\leq s$ and the embedding $U^{(s)}\subset X^{(s)}$.

Let $C_s$ be the locally closed cylinder ${\rm Cont}^{q}(E)$ in $Y_{\infty}$ and $C_0$ the closure of its image $(g_0)_{\infty}(C_s)$ in $X_{\infty}$. It is clear that $\codim C_s=q$. We now show that $C=C_0$ is a cylinder that satisfies our conditions. For every $i$ with $1\leq i\leq s$, we denote by $C_i$ the closure of the image of $C_s$ in $X^{(i)}_{\infty}$ under the map $(g_{i})_{\infty}: Y_{\infty}\rightarrow X^{(i)}_{\infty}$. Similarly, we denote by $D_i$ the closure of the image of $C_s$ in $U^{(i)}_{\infty}$. It is clear that $D_i$ is the closure of the image of $C_{i+1}$ in $U^{(i)}_{\infty}$ under the map $(f_{i+1})_{\infty}: X^{(i+1)}_{\infty}\rightarrow U^{(i)}_{\infty}$ and $C_i$ is the closure of $D_i$ in $X^{(i)}_{\infty}$. By Lemma \ref{bl} and Lemma \ref{closure of cylinder} part (b), using descending induction on $i<s$, we deduce that $D_i$ is a cylinder in $U^{(i)}_{\infty}$ and $C_i$ is a cylinder in $(X^{(i)})_{\infty}$.  We also deduce that $\ord_{C_i}=\ord_{D_i}=\ord_{C_{i+1}}$ and $$\codim C_i=\codim D_i=\codim C_{i+1}+ \ord_{C_i}{K_{X^{(i+1)}/X^{(i)}}}.$$

We thus obtain $\ord_C=\ord_{C_{1}}=\cdots =\ord_{C_{s}}=q\cdot\ord_E$ and
\begin{equation*}
\begin{array}{cl}
\codim C&=\codim C_1+ \ord_C(K_{X^{(1)}/X})\\
&\cdots\\
&=\codim C_s+\sum\limits_{i=0}^{s-1} \ord_C(K_{X^{(i+1)}/X^{(i)}})=q+q\cdot\ord_E(K_{Y/X}).
\end{array}
\end{equation*}
\end{proof}

It is clear that Theorem \ref{corr} follows from Lemma \ref{cyltoval} and Lemma \ref{valtocyl}.
We now prove Theorem \ref{jetformula}.

\begin{proof}
If $Y=X$, the assertion is trivial. Hence we may and will assume $Y$ is a closed subscheme of $X$ and $Y\neq X$.
By Theorem \ref{corr}, we deduce that
\begin{equation*}
\lct(X,Y):=\inf\limits_{E}\frac{1+\ord_E(K_{-/X})}{\ord_E(Y)}=\inf\limits_{C}\frac{\codim C}{\ord_C(Y)}.
\end{equation*}
where $C$ varies over the irreducible closed cylinders which do not dominate $X$.

We first show that
\begin{equation}\label{star}\tag{$\dag$}
\lct(X,Y)\leq\inf\limits_{m\geq 0}\frac{\codim(Y_m,X_m)}{m+1}
\end{equation}
For every $m\geq 0$, let $S_m$ be an irreducible component of $Y_m$ which computes the codimension of $Y_m$ in $X_m$ and $C_m$ the closed irreducible cylinder $\psi^{-1}_m(S_m)$ in $X_{\infty}$. We thus obtain $$\codim(C_m)=\codim(S_m,X_{m})=\codim(Y_m,X_m).$$
The image $\psi_0(C_m)=\rho^m_0(S_m)$ is contained in $Y$, which implies that $C_m$ does not dominate $X$. By the definition of contact loci, we know that $Y_m={\rm Cont}^{\geq m+1}(Y)_m$ in $X_m$. This implies that $\ord_{C_m}(Y)\geq  m+1$. We conclude that
$$\lct(X,Y)\leq\frac{\codim(C_m)}{\ord_{C_m}(Y)}=\frac{\codim(Y_m,X_m)}{m+1}.$$ Taking infimum over all integers $m\geq 0$, we now have the inequality (\ref{star}).

We now prove the reverse inequality.
Given an irreducible closed cylinder $C$ which does not dominate $X$. If $\ord_C(Y)=0$, then $\frac{\codim C}{\ord_C(Y)}=\infty$. Hence $$\frac{\codim C}{\ord_C(Y)}\geq \inf\limits_{m\geq 0}\frac{\codim(Y_m,X_m)}{m+1}.$$ From now on, we may and will assume that $\ord_C(Y)>0$. Let $m=\ord_C(Y)-1$. Since $C$ is a subcylinder of the contact locus ${\rm Cont}^{\geq m+1}(Y)=\psi_m^{-1}(Y_m)$, we have
$$\frac{\codim C}{\ord_C(Y)}\geq \frac{\codim(Y_m,X_m)}{m+1}\geq \inf\limits_{m\geq 0}\frac{\codim(Y_m,X_m)}{m+1}.$$ We now take infimum over all cylinders $C$ which do not dominate $X$ and obtain
$$\lct(X,Y)=\inf_{C}\frac{\codim C}{\ord_C(Y)}\geq  \inf\limits_{m\geq 0}\frac{\codim(Y_m,X_m)}{m+1}.$$
\end{proof}

Let $X$ be a smooth variety over a perfect field $k$, $Y$ a closed subscheme of $X$ and $Z$ a closed subset of $X$. Recall that
$$\lct_Z(X,Y)=\inf\limits_{E/X}\frac{\ord_E(K_{-/X})+1}{\ord_E{Y}}$$
where $E$ varies over all divisors over $X$ whose center in $X$ intersects $Z$. By the correspondence in Theorem \ref{corr}(2), we deduce that for every such divisor $E$ over $X$, the corresponding closed irreducible cylinder $C$ satisfies $$\overline{\psi_0^X(C)}\cap Z\neq \emptyset.$$ Applying the argument in the proof of Theorem \ref{jetformula}, we can show the following generalized log canonical threshold formula in terms of jet schemes.

\begin{proposition}\label{genformula}
Let $(X,Y)$ be a pair over a perfect field $k$ and $Z$ a closed subset of $X$. We have
$$\lct_Z(X,Y)=\inf\limits_{C\subset X_{\infty}}\frac{\codim C}{\ord_C(\fra)}=\inf\limits_{m\geq 0}\frac{\codim_Z(Y_m,X_m)}{m+1}$$
where $C$ varies over all irreducible closed cylinders with $\overline{\psi_0(C)}\cap Z\neq\emptyset, \overline{\psi_0(C)}\neq X$ and $\codim_Z(Y_m,X_m)$ is the minimum codimension of an irreducible component $T$ of $Y_m$ such that $\overline{\pi_m(T)}\cap Z\neq \emptyset$.
\end{proposition}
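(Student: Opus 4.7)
The plan is to adapt the proof of Theorem \ref{jetformula} by tracking, at every step, the center condition with respect to $Z$. The correspondence in Theorem \ref{corr} already matches divisors $E$ over $X$ with irreducible closed cylinders $C \subset X_\infty$ not dominating $X$; the only new ingredient is that this correspondence respects centers. Specifically, I would first observe that in the proof of Lemma \ref{cyltoval} the center of the valuation $\ord_C$ on $X$ is identified with $\overline{\psi_0(C)}$, while if $\ord_C = q \cdot \ord_E$ this same center is, by definition, $c_X(E)$. Hence $c_X(E) \cap Z \neq \emptyset$ if and only if $\overline{\psi_0(C)} \cap Z \neq \emptyset$, and combining this with Theorem \ref{corr} and the defining formula for $\lct_Z$ in terms of divisorial valuations immediately yields the first equality
$$\lct_Z(X,Y) = \inf_{C}\frac{\codim C}{\ord_C(\fra)},$$
where $C$ varies over the irreducible closed cylinders with $\overline{\psi_0(C)} \cap Z \neq \emptyset$ and $\overline{\psi_0(C)} \neq X$.

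For the second equality I would reproduce the two inequalities from the proof of Theorem \ref{jetformula}, restricted to the admissible class. For ``$\leq$'', for each $m \geq 0$ I pick an irreducible component $T$ of $Y_m$ realizing $\codim_Z(Y_m, X_m)$, so that $\overline{\pi_m(T)} \cap Z \neq \emptyset$, and set $C_m = \psi_m^{-1}(T)$. Then $\psi_0(C_m) = \pi_m(T) \subseteq Y \neq X$, so $C_m$ is admissible; moreover $C_m \subseteq \cont^{\geq m+1}(Y)$ gives $\ord_{C_m}(\fra) \geq m+1$, and $\codim C_m = \codim(T, X_m) = \codim_Z(Y_m, X_m)$, producing the bound after dividing and taking infimum. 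For ``$\geq$'', given an admissible cylinder $C$ with $\ord_C(\fra) > 0$ (otherwise the ratio is infinite), I set $m = \ord_C(\fra) - 1$ and observe $C \subseteq \cont^{\geq m+1}(Y) = \psi_m^{-1}(Y_m)$. Since $C$ is irreducible, the closure of $\psi_m(C)$ lies in a single irreducible component $T$ of $Y_m$, and then $\overline{\pi_m(T)} \supseteq \overline{\psi_0(C)}$ meets $Z$, so $T$ is admissible. This yields $\codim C \geq \codim(T, X_m) \geq \codim_Z(Y_m, X_m)$, hence the desired inequality.

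The main obstacle, and really the only one not already carried by the proof of Theorem \ref{jetformula}, is the verification that the correspondence in Theorem \ref{corr} matches the centers of divisorial and cylinder valuations: once this is established, the admissibility condition ``center meets $Z$'' passes freely between divisors, cylinders, and components of jet schemes, and the two inequalities above are formal repetitions of the absolute case with the trivial extra check that the chosen $T$'s and $C_m$'s retain the center-meeting-$Z$ property.
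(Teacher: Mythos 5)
Your proposal is correct and takes essentially the same route the paper indicates. The paper only sketches the argument, saying the result follows by "applying the argument in the proof of Theorem \ref{jetformula}" together with the observation (stated just before the proposition) that the correspondence in Theorem \ref{corr} is compatible with centers; you make that compatibility explicit — since $\ord_C = q\cdot\ord_E$ forces $c_X(E)$ and $\overline{\psi_0(C)}$ to be the same (both being the center of the common valuation, and the latter identification already appearing in the proof of Lemma \ref{cyltoval}) — and then reproduce the two inequalities from Theorem \ref{jetformula} while tracking the admissibility condition. This is precisely the intended proof, just written out in full.
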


%\begin{remark}
%If $U$ is an open neighborhood of $Z$ in $X$ such that for every $T$ with $\pi_m(T)\cap Z=\emptyset$, $U\cap \pi_m(T)=\emptyset$. In particular, this implies that $\dim_Z(Y_m)=\dim(Y\cap U)_m$. ****question**** do we also have $\lct_Z(X,Y)=\lct(U,Y\cap U)$?
%\end{remark}

\begin{remark} We have seen that  $$\lct(X,Y):=\inf\limits_{E}\frac{1+\ord_E(K_{-/X})}{\ord_E(Y)}=\inf\limits_{C}\frac{\codim C}{\ord_C(Y)}=\inf\limits_{m\geq 0}\frac{\codim(Y_m,X_m)}{m+1}.$$
If one of the infumums can be achieved, then so are the other two.  In particular, when the base field $k$ is of characteristic $0$, log resolutions of $(X,Y)$ exist. Hence the log canonical threshold $\lct(X,Y)$ can be computed at some exceptional divisor $E$ in the log resolution. In this case, all the infimums can be replaced by minimuns.
\end{remark}

\begin{remark} Let $k$ be an algebraically closed field of characteristic $0$ and $K=k(s)$ the function field of $\AAA^1_k$. Hence $K$ is not a perfect field. There are examples of pairs $(X,Z)$ over $K$ such that the formula in Theroem \ref{jetformula} does not hold. For instance, let $X=\Spec K[x]$ and $Y$ be a prime divisor on $X$ defined by a single equation $(x^p-s)$.  It is easy to check that $\lct(X,Y)=1$. On the other hand, for every $m$, $X_m=\AAA^{m+1}$ and $Y_m=\AAA^{m-\lfloor \frac{m}{p}\rfloor}$. Hence $\inf\limits_{m\geq 0}\frac{\codim(Y_m,X_m)}{m+1}=1/p$. We thus have $\lct(X,Y)\neq \inf\limits_{m\geq 0}\frac{\codim(Y_m,X_m)}{m+1}$.
\end{remark}

\section{The log canonical threshold via jets}
In this section, we apply Theorem \ref{jetformula} to deduce properties of log canonical threshold for pairs.
Our first corollary of Theorem \ref{jetformula} is the following comparison result in the setting of reduction to prime characteristic. Suppose that $X$ is the affine variety $\AAA^n_{\ZZ}$ over the ring $\ZZ$ and $Y$ is a subscheme of $X$ defined by an ideal $\fra\subset \ZZ[x_1,\cdots,x_n]$ that contained in the ideal $(x_1, \cdots,x_n)$. For every prime number $p$, let $X_p=\AAA^n_{\FF_p}$ and $Y_p$ be the subscheme of $X_p$ defined by $\fra\cdot \FF_p[x_1,\cdots,x_n]$. Note that a log resolution of $(X_{\QQ},Y_{\QQ})$ induces a log resolution of the pair $(X_p,Y_p)$ for $p$ large enough. It follows that $\lct_0(Y_{\QQ},X_{\QQ})=\lct_0(Y_{p},X_{p})$ for all but finitely many $p$. We now prove the following inequality for every prime $p$.

\begin{corollary}\label{comp0andp}
If $(X,Y)$ is a pair as above, then for every prime integer $p$, we have
$$\lct_0(X_{\QQ},Y_{\QQ})\geq \lct_0(X_{p},Y_{p}),$$
where the log canonical thresholds are computed at the origin.
\end{corollary}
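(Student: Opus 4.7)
The plan is to invoke Proposition~\ref{genformula} on both pairs, translate the resulting codimensions into local dimensions at the zero-jet over the origin, and then compare across primes via the family $Y\to\Spec\ZZ$. Since $\QQ$ and $\FF_p$ are perfect, Proposition~\ref{genformula} applies and yields
$$\lct_0(X_k,Y_k)=\inf_{m\geq 0}\frac{\codim_0((Y_k)_m,(X_k)_m)}{m+1}\qquad\text{for }k\in\{\QQ,\FF_p\},$$
so the problem reduces to showing, for every $m\geq 0$, the codimension inequality
$$\codim_0((Y_p)_m,(X_p)_m)\leq \codim_0((Y_\QQ)_m,(X_\QQ)_m).$$

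To pass to local dimensions at the zero-jet, I will use the $\AAA^1$-action together with Remark~\ref{closed}: an irreducible component $T$ of $(Y_s)_m$ satisfies $0\in\pi_m(T)$ if and only if $T$ contains the zero-jet $\sigma_m(0)$, since $\pi_m(T)=\sigma_m^{-1}(T\cap\sigma_m(X))$ and the $\AAA^1$-orbit of any point of $T$ lying over $0$ limits to $\sigma_m(0)\in T$. Because $(X_s)_m\cong\AAA^{(m+1)n}_{k_s}$ is irreducible of dimension $(m+1)n$, this identification gives
$$\codim_0((Y_s)_m,(X_s)_m)=(m+1)n-\dim_{\sigma_m(0)}(Y_s)_m,$$
so it is enough to prove $\dim_{\sigma_m(0)}(Y_\QQ)_m\leq \dim_{\sigma_m(0)}(Y_p)_m$.

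For this last inequality I form the relative jet scheme $(Y/\ZZ)_m$, whose construction commutes with base change, so its fibers over the generic point $(0)$ and the closed point $(p)$ of $\Spec\ZZ$ are $(Y_\QQ)_m$ and $(Y_p)_m$ respectively. The hypothesis $\fra\subseteq(x_1,\ldots,x_n)$ provides an origin section $\tau\colon\Spec\ZZ\to Y$, and $\xi:=\sigma_m\circ\tau\colon\Spec\ZZ\to (Y/\ZZ)_m$ marks the zero-jet $\sigma_m(0)$ in each fiber. Since $(p)$ specializes from $(0)$ in $\Spec\ZZ$, the point $\xi((p))$ lies in the closure of $\xi((0))$ inside $(Y/\ZZ)_m$; upper semicontinuity of local fiber dimension (Chevalley's theorem, in the spirit of Lemma~\ref{semicont}) applied to the finite-type morphism $(Y/\ZZ)_m\to\Spec\ZZ$ then delivers $\dim_{\xi((p))}(Y_p)_m\geq \dim_{\xi((0))}(Y_\QQ)_m$, which is exactly the inequality needed, and taking the infimum over $m$ concludes the proof. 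The main subtlety I expect is the codimension-to-local-dimension translation in the second paragraph; once the $\AAA^1$-contraction onto $\sigma_m(X)$ is firmly in place, the characteristic-wise comparison reduces to a standard semicontinuity result for fibers of a finite-type morphism.
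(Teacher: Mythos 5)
Your proposal is correct, and it takes a route that is close in spirit to the paper's but genuinely different in its key technical ingredient. The paper invokes Musta\c{t}\v{a}'s original characteristic-zero formula from \cite{Mmus2}, which is stated in terms of the codimension of the \emph{full fiber} $(Y_{\QQ})_{m,0}=(\pi_m^{Y_\QQ})^{-1}(0)$, bounds $\lct_0(X_p,Y_p)$ above using the chain $\codim_0((Y_p)_m,\,\cdot\,)\leq\codim((Y_p)_{m,0},\,\cdot\,)$, and then compares fiber dimensions across primes via its own custom Lemma~\ref{semicont}, which is proved from scratch by exhibiting $(\pi_m^{W_s})^{-1}(\tau(s))$ as the spectrum of a graded ring and appealing to semicontinuity for projective morphisms. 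You instead apply Proposition~\ref{genformula} symmetrically to both $(X_\QQ,Y_\QQ)$ and $(X_p,Y_p)$, reduce directly to the inequality $\codim_0((Y_p)_m,(X_p)_m)\leq\codim_0((Y_\QQ)_m,(X_\QQ)_m)$, translate $\codim_0$ into the local dimension $\dim_{\sigma_m(0)}(Y_s)_m$ via the $\AAA^1$-contraction onto the zero section (Remark~\ref{closed}), and compare across primes by Chevalley's semicontinuity of the function $x\mapsto\dim_x\bigl((Y/\ZZ)_m\bigr)_{f(x)}$ along the specialization $\xi((0))\rightsquigarrow\xi((p))$. The translation step is correct: a component $T$ of $(Y_s)_m$ satisfies $0\in\pi_m(T)$ if and only if $\sigma_m(0)\in T$, since $\pi_m(T)$ is closed and $T$ is $\AAA^1$-invariant, so $\codim_0((Y_s)_m,(X_s)_m)=(m+1)n-\dim_{\sigma_m(0)}(Y_s)_m$. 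Your approach buys internal consistency (Proposition~\ref{genformula} on both sides, no need to import \cite{Mmus2}[Cor.~3.6]) and a cleaner comparison that never passes through the fiber codimension. Two caveats: your parenthetical ``in the spirit of Lemma~\ref{semicont}'' slightly overstates the parallel, since that lemma is about the dimension of the full fiber $(\pi_m^{W_s})^{-1}(\tau(s))$, not the local dimension $\dim_x$ at a marked point — you genuinely need the standard Chevalley-type statement (EGA~IV~13.1.3) that the paper was specifically arranged to avoid citing; and you should note that $\xi$ being a section of a separated morphism makes $\xi(\Spec\ZZ)$ closed, which is what guarantees that $\xi((p))$ lies in the closure of $\xi((0))$.
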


\begin{proof}
Using \cite{Mmus2}[Corollary 3.6], we obtain
$$\lct_0(Y_{\QQ},X_{\QQ})= \inf\limits_{m\geq 0}\frac{\codim((Y_{\QQ})_{m,0},(X_{\QQ})_m)}{m+1}.$$
By Proposition \ref{genformula}, for every integer $m\geq 0$, we have
$$\lct_0(X_p,Y_p)\leq \frac{\codim_0((Y_{p})_m,(X_p)_m)}{m+1}\leq \frac{\codim((Y_{p})_{m,0},(X_p)_m)}{m+1}.$$
In order to complete the proof, it is enough to show that for every $m\geq 1$ and every prime $p$,
$$\codim((Y_{p})_{m,0},(X_p)_m)\leq \codim((Y_{\QQ})_{m,0},(X_{\QQ})_m).$$ Since $\dim(X_{\QQ})_m=\dim(X_p)_m=n(m+1)$, it suffices to show that $$\dim(Y_p)_{m,0}\geq \dim(Y_{\QQ})_{m,0}$$ for every $p$.

Let $S$ be $\Spec\ZZ$. Recall that $(Y/S)_m$ is the $m^{\text{th}}$ relative jet scheme of $Y/S$. Let $\tau: \Spec \ZZ\rightarrow Y$ be the zero section.
Since $\fra\subset (x_1, \cdots, x_n)$, the map $\tau$ factors through $Y$. By Lemma \ref{semicont}, we deduce that for every $m\geq 1$, the function $$f(s)=\dim(Y_s)_{m,\tau(s)}=\dim(Y_s)_{m,0}$$ is upper semi-continuous on $S$. Hence we have $\dim(Y_p)_{m,0}\geq \dim(Y_{\QQ})_{m,0}$ for every $m$ and $p$.
This completes the proof.
\end{proof}

This in turn has an application to an open problem about the connection between
log canonical thresholds and $F$-pure thresholds. Recall that in positive characteristic
Takagi and Watanabe \cite{TW} introduced an analogue of the log canonical threshold,
the $F$-pure threshold. With the above notation, it follows from
\cite{HW} that $\lct_0(X_p,Y_p)\geq \fpt_0(X_p,Y_p)$ for every prime $p$, where $\fpt_0(X_p,Y_p)$ is the $F$--pure threshold of the pair $(X_p,Y_p)$ at $0$. By combining this
with Corollary~\ref{comp0andp}, we obtain the following result, which seems to have been
an open question.

\begin{corollary}\label{hycomp}
With the above notation, we have $\lct_0(X_{\QQ}, Y_{\QQ}))\geq \fpt_0(X_p,Y_p)$
for every prime $p$.
\end{corollary}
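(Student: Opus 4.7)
The statement is essentially a one-line chaining of two inequalities, so the plan is very short: combine the comparison result just established with a known inequality between log canonical thresholds and $F$-pure thresholds in positive characteristic.

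First, I would invoke Corollary~\ref{comp0andp}, which gives
\begin{equation*}
\lct_0(X_\QQ, Y_\QQ) \;\geq\; \lct_0(X_p, Y_p)
\end{equation*}
for every prime $p$. This is the nontrivial input, and its proof via the jet-scheme formula and upper semicontinuity of fiber dimensions has already been carried out.

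Next, I would cite the Hara--Watanabe inequality from \cite{HW}, which says that for a pair $(X_p, Y_p)$ in characteristic $p$, the log canonical threshold dominates the $F$-pure threshold at the origin:
\begin{equation*}
\lct_0(X_p, Y_p) \;\geq\; \fpt_0(X_p, Y_p).
\end{equation*}
This is stated as a given in the paragraph preceding the corollary and requires no additional work on our part.

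Chaining the two inequalities yields $\lct_0(X_\QQ, Y_\QQ) \geq \fpt_0(X_p, Y_p)$ for every prime $p$, which is the desired conclusion. There is no real obstacle here; the content of the corollary lies entirely in Corollary~\ref{comp0andp} (which is where the characteristic-free jet-scheme formula plays its essential role) together with the Hara--Watanabe comparison. The novelty is that previously one could only deduce such an inequality for $p$ large enough (using the fact that a characteristic-zero log resolution reduces mod $p$ for almost all $p$), whereas Corollary~\ref{comp0andp} removes this restriction and makes the bound uniform in $p$.
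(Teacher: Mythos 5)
Your proposal is correct and follows exactly the same route as the paper: chain Corollary~\ref{comp0andp} with the Hara--Watanabe inequality $\lct_0(X_p,Y_p)\geq \fpt_0(X_p,Y_p)$ from \cite{HW}. Nothing further is needed.
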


Let $k$ be a perfect field and $\overline{k}$ be the algebraic closure of $k$. For every scheme $X$ over $k$, we denote by $\overline{X}$ the fiber product $X\times_k\Spec \overline{k}$.

\begin{corollary}\label{passtoclosure}
Let $X$ be a smooth variety over a perfect field $k$ and $Y$ a closed subscheme of $X$. We have
$$\lct(X,Y)=\lct(\overline{X},\overline{Y}).$$
\end{corollary}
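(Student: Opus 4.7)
The plan is to reduce the equality $\lct(X,Y)=\lct(\overline{X},\overline{Y})$ to a statement about jet scheme dimensions, using Theorem \ref{jetformula} on both sides. By that theorem,
\[
\lct(X,Y)=\inf_{m\geq 0}\frac{\codim(Y_m,X_m)}{m+1}\quad\text{and}\quad \lct(\overline{X},\overline{Y})=\inf_{m\geq 0}\frac{\codim(\overline{Y}_m,\overline{X}_m)}{m+1},
\]
so it suffices to show that $\codim(Y_m,X_m)=\codim(\overline{Y}_m,\overline{X}_m)$ for every $m\geq 0$.

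First I would invoke the compatibility of jet schemes with base change noted in the excerpt: the identification $(W'/S')_m\cong (W/S)_m\times_S S'$ for a base change $S'\to S$, applied with $S=\Spec k$ and $S'=\Spec\overline{k}$, gives $\overline{X}_m\cong X_m\times_k\Spec\overline{k}$ and $\overline{Y}_m\cong Y_m\times_k\Spec\overline{k}$. So the question becomes whether $\dim$ is preserved under the field extension $k\subset\overline{k}$ for $X_m$ and $Y_m$ (both of finite type over $k$).

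Next I would use the standard fact that for any scheme $Z$ of finite type over a field $k$, base change to an extension field preserves dimension: writing $Z$ as a finite union of its irreducible components $Z_i$, the base change $Z_i\times_k\Spec\overline{k}$ is a finite union of irreducible components all of the same dimension as $Z_i$ (since $k(Z_i)\otimes_k\overline{k}$ is a finite product of fields, each of the same transcendence degree over $\overline{k}$ as $k(Z_i)$ has over $k$). This gives $\dim\overline{X}_m=\dim X_m=n(m+1)$ and $\dim\overline{Y}_m=\dim Y_m$. Since $X_m$ (and hence $\overline{X}_m$) is of pure dimension $n(m+1)$, we conclude
\[
\codim(\overline{Y}_m,\overline{X}_m)=n(m+1)-\dim\overline{Y}_m=n(m+1)-\dim Y_m=\codim(Y_m,X_m),
\]
and taking the infimum over $m$ gives the equality of log canonical thresholds.

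I do not expect any real obstacle here: once Theorem \ref{jetformula} is available, the argument is purely a base-change check. The only point requiring a moment of care is that $Y_m$ need not be irreducible or equidimensional, so one must work with $\dim Y_m$ rather than with the dimensions of individual components and rely on the dimension-preservation statement for arbitrary finite-type $k$-schemes under extension of the ground field. The perfectness of $k$ is not strictly needed for this argument, but it is used throughout the paper to guarantee that Theorem \ref{jetformula} applies.
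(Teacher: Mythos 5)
Your proof is correct and follows essentially the same route as the paper: reduce via Theorem \ref{jetformula} to preservation of jet-scheme codimensions under base change to $\overline{k}$, which holds because dimension of finite-type $k$-schemes is invariant under field extension. You simply spell out the base-change compatibility of jet schemes and the dimension argument in more detail than the paper does.
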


\begin{proof}
For every scheme $Z$ over field $k$, we know that $\dim Z=\dim \overline{Z}$. We thus have for every $m\geq 0$,
$$\codim(Y_m, X_m)=\codim(\overline{Y}_m,\overline{X}_m).$$ Our assertion follows from Theorem \ref{jetformula}.
\end{proof}

\begin{remark}
Corollary \ref{passtoclosure} is not true if the base field is not perfect. For instance, let $k$ be an algebraically closed field and $K=k(s)$ the function field of $\AAA^1_k$. Let $X=\Spec K[x]$ and $Y$ be the closed subscheme of $X$ defined by $x^p-s$. We have seen that $\lct(X,Y)=1$.
Let $\overline{K}$ be the algebraic closure of $K$. We thus have $X_{\overline K}=\AAA^1_{\overline{K}}$ and $Y_{\overline{K}}$ is a nonreduced subscheme of $X_{\overline{K}}$ defined by $(x-s^{1/p})^p$. One can check that $\lct(X_{\overline{K}},Y_{\overline{K}})=1/p$.
\end{remark}

\begin{corollary}\label{IOA}
Let $X$ be a smooth variety over a perfect field $k$ and $Y$ a closed subscheme of $X$. If $H$ is a smooth irreducible divisor on $X$ which intersects $Y$ and $Z\subset H$ is a nonempty closed subset, then
$$\lct_{Z}(X,Y)\geq \lct_{Z}(H,H\cap Y).$$
\end{corollary}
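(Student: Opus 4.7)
The plan is to read off the inequality from the jet-scheme formula
\[
\lct_Z(X,Y) \;=\; \inf_{m \geq 0} \frac{\codim_Z(Y_m, X_m)}{m+1}, \qquad \lct_Z(H, H\cap Y) \;=\; \inf_{m \geq 0} \frac{\codim_Z((H\cap Y)_m, H_m)}{m+1}
\]
of Proposition \ref{genformula}. Since jet schemes commute with fiber products of closed subschemes, we have $(H\cap Y)_m = H_m \cap Y_m$ inside $X_m$. Consequently, the desired inequality $\lct_Z(X,Y) \geq \lct_Z(H, H\cap Y)$ reduces at once to the level-by-level bound
\[
\codim_Z\bigl((H\cap Y)_m,\, H_m\bigr) \;\leq\; \codim_Z(Y_m, X_m) \qquad \text{for every } m \geq 0.
\]

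To establish this, I would fix $m$, let $T$ be an irreducible component of $Y_m$ with $\pi_m(T) \cap Z \neq \emptyset$ that realizes the right-hand minimum, and pick $z \in \pi_m(T) \cap Z$; since $Z \subset H$, we have $z \in H$. By Remark \ref{closed} the component $T$ is invariant under the natural $\AAA^1$-action on $X_m$, and evaluating this action at $0$ yields $\sigma_m(z) \in T$. Simultaneously $z \in H$ forces $\sigma_m(z) \in H_m$, so $\sigma_m(z) \in T \cap H_m$; in particular this intersection is nonempty.

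The main geometric input is Krull's Hauptidealsatz: because $H$ is a smooth divisor, its ideal is locally principal, and the construction in the proof of Proposition \ref{existofJS} shows that $H_m \subset X_m$ is locally cut out by $m+1$ equations. Hence every irreducible component of $T \cap H_m$ has codimension at most $m+1$ in $T$. Pick such a component $T'_0$ passing through $\sigma_m(z)$, so that $\dim T'_0 \geq \dim T - (m+1)$ and $z \in \pi_m(T'_0)$. Since $T'_0 \subset Y_m \cap H_m = (H\cap Y)_m$, it lies in some irreducible component $T'$ of $(H\cap Y)_m$, with $z \in \pi_m(T') \cap Z$. Using $\dim H_m = \dim X_m - (m+1)$, one computes
\[
\codim(T', H_m) \;\leq\; \codim(T'_0, H_m) \;=\; \dim H_m - \dim T'_0 \;\leq\; \dim X_m - \dim T \;=\; \codim(T, X_m),
\]
which yields the sought bound.

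The only delicate step is arranging $T \cap H_m \neq \emptyset$; this depends simultaneously on the $\AAA^1$-invariance of components of $Y_m$ (to drag $\sigma_m(z)$ into $T$) and on the hypothesis $Z \subset H$ (to place it inside $H_m$). Once this is secured, Krull's theorem controls the loss of codimension when intersecting with $H_m$, and the two possibilities $T \subset H_m$ (where $T'_0 = T$ and the estimate is even stronger) and $T \not\subset H_m$ are handled uniformly by the same computation.
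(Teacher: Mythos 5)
Your proof is correct and follows essentially the same route as the paper's: reduce via Proposition~\ref{genformula} to the level-by-level codimension bound, use the $\AAA^1$-invariance of irreducible components of $Y_m$ and the zero section to force $T \cap H_m \neq \emptyset$ at a point over $Z$, then bound the codimension drop by $m+1$ using that $H_m$ is locally cut out by $m+1$ equations. The only cosmetic difference is that the paper first disposes of the degenerate cases $H \cap Y = H$ and $Z \cap Y = \emptyset$ explicitly, while your argument absorbs them vacuously; both are fine.
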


\begin{proof} The case $H\cap Y=H$ is trivial since $\lct_Z(H,H\cap Y)=0$. We may thus assume $Y\cap H\neq H$. Similarly, if $Z\cap Y=\emptyset$, then both $\lct_{Z}(X,Y)$ and $\lct_Z(H,H\cap Y)$ are equal to $\infty$. We should assume $Z\cap Y\neq \emptyset$ from now on.

By Proposition \ref{genformula}, we only have to prove that for every $m\geq 0$,
$$\codim_Z(Y_m,X_m)\geq \codim_Z((H\cap Y)_m,H_m).$$
Let $T$ be an irreducible component of $Y_m$ such that
\begin{equation*}
\pi_m(T)\cap Z\neq \emptyset ~\text{and}~ \codim T=\codim_Z(Y_m,X_m).
\end{equation*}
Since $H$ is a Cartier divisor on $X$, $H\cap Y$ is defined locally in Y by one equation. This implies that $(H\cap Y)_m=H_m\cap Y_m$ is defined locally in $Y_m$ by $m+1$ equations. If $$\pi_m(T\cap H_m)\cap Z\neq \emptyset,$$ then there is a component of $T\cap H_m$, denoted by $S$, such that $\pi_m(S)\cap Z\neq \emptyset $ and $\dim S\geq \dim T-(m+1)$. Note that $\dim X_m=\dim H_m +m+1$ and we conclude that
$$\codim_Z((H\cap Y)_m,H_m)\leq \codim(S,H_m)\leq \codim(T\cap H_m,H_m)\leq\codim(T,X_m).$$

We now prove that $\pi_m(T\cap H_m)\cap Z\neq \emptyset$. Let $\gamma_m\in T$ such that $\pi_m(\gamma_m)\in Z$. Recall that $\sigma_m: Y\rightarrow Y_m$ is the zero section.  Since $T$ is invariant under the action of $\AAA^1$, the orbit of $\gamma_m$ is a subset of $T$. In particular, $\sigma_m(\pi_m(\gamma_m))\in T$. %Hence we have $\sigma_m(\pi_m(T))\subset T$.
Since the zero section is functorial by its construction, we get $\sigma_m(Y\cap H)\subset Y_m\cap H_m$.  In particular, $\sigma_m(\pi_m(\gamma_m))$ is in $T\cap H_m$ and its image under $\pi_m$ is in $Z$. This completes our proof.
\end{proof}

\begin{corollary}
If $X$ is a smooth complex variety and $Y\subset X$ is a proper closed subscheme, then for we have
$\lct(X,Y)>0$.
\end{corollary}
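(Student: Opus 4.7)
The plan is to prove the stronger pointwise inequality
\[
\lct_x(X, Y) \;\geq\; \frac{1}{\ord_x(Y)} \qquad\text{for every } x \in Y
\]
by induction on $n = \dim X$, and then to deduce $\lct(X, Y) > 0$ as follows: on the Noetherian variety $X$ the function $x \mapsto \ord_x(Y)$ is upper semicontinuous (for local generators $f_1,\dots,f_r$ of $I_Y$ one has $\ord_x(Y) = \min_j \ord_x(f_j)$, a min of upper semicontinuous functions) and finite, hence bounded by some $M < \infty$ on the closed subset $Y$, giving $\lct(X,Y) = \inf_{x\in Y}\lct_x(X,Y) \geq 1/M > 0$.

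The base case $n = 1$ is immediate: a proper closed subscheme $Y$ of a smooth curve is a nonzero effective Cartier divisor, locally cut out near $x \in Y$ by $t^e$ with $e = \ord_x(Y)$, and one computes directly $\lct_x(X,Y) = 1/e$. For the inductive step with $n \geq 2$, fix $x \in Y$ and shrink $X$ to an affine neighborhood of $x$. The key step is to produce a smooth irreducible divisor $H \subset X$ through $x$ with $H \not\subseteq Y$ and $\ord_x(H \cap Y) = \ord_x(Y)$. Granting this, $x \in H \cap Y$ and $H \cap Y$ is a proper closed subscheme of the smooth $(n{-}1)$-dimensional variety $H$, so Corollary~\ref{IOA} with $Z = \{x\}$ and the inductive hypothesis applied to $(H, H \cap Y)$ give
\[
\lct_x(X,Y) \;\geq\; \lct_x(H, H\cap Y) \;\geq\; \frac{1}{\ord_x(H \cap Y)} \;=\; \frac{1}{\ord_x(Y)},
\]
closing the induction.

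The main obstacle is the construction of such a divisor $H$, which is essentially a genericity argument. Since $k$ is perfect (and for the stated result, $k = \CC$, so certainly perfect and infinite), pick a regular system of parameters $t_1,\dots,t_n$ at $x$ and generators $f_1,\dots,f_r$ of $I_{Y,x}$ with nonzero initial forms $(f_j)_{q_j}$ of degrees $q_j = \ord_x(f_j)$ in the associated graded ring $k[t_1,\dots,t_n]$. For a sufficiently general linear form $h = \sum \lambda_i t_i$ (avoiding the finitely many linear factors of each $(f_j)_{q_j}$), the divisor $H = V(h)$ is smooth and irreducible at $x$, is not contained in $Y$, and the initial form of each restriction $f_j|_H$ equals $(f_j)_{q_j}|_{h=0} \neq 0$ of the same degree $q_j$; consequently
\[
\ord_x(H\cap Y) \;=\; \min_j \ord_x(f_j|_H) \;=\; \min_j q_j \;=\; \ord_x(Y).
\]
If $k$ were a finite field, one would first base-change to $\overline k$ via Corollary~\ref{passtoclosure}, which preserves both $\lct$ and $\ord_x$. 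With $H$ in hand, the induction closes and the semicontinuity argument above upgrades the pointwise bound to $\lct(X,Y) > 0$.
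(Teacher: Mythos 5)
Your proof is correct and follows the same strategy as the paper: prove the pointwise bound $\lct_x(X,Y)\geq 1/\ord_x(Y)$ by induction on $\dim X$, reducing to a well-chosen hyperplane section via Corollary~\ref{IOA}. The only difference is that you supply two details the paper leaves implicit — the initial-form genericity argument producing $H$ with $\ord_x(H\cap Y)=\ord_x(Y)$, and the upper-semicontinuity argument bounding $\ord_x(Y)$ on $Y$.
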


\begin{proof}
Since log canonical thresholds can be computed after passing to an algebraic closure of $k$, we can assume $k$ is algebraically closed. It follows from the definition that $$\lct(X,Y)=\inf\limits_{x\in Y}\lct_x(X,Y).$$ For every $x\in Y$, we will show that
\begin{equation}\label{lctmult}
\lct_x(X,Y)\geq 1/{\ord_x(Y)}.
\end{equation}
We thus have $\lct_x(X,Y)\geq 1/d$ where $d=\max\limits_{x\in Y} \,\ord_x(Y)$. Here $\ord_xY$ is the maximal value $q$ such that $I_{Y,x}\subseteq m^q_{X,x}$, where $m_{X,x}$ is the ideal defining $x$.

We prove the inequality (\ref{lctmult}) by induction on $\dim(X)$. If $X$ is a smooth curve, then it follows from definition that $\lct_{x}(X,Y)=\ord_xY$. We now assume that $\dim X\geq 2$. After replacing $X$ by an open neighborhood of $x$, we may find $H$, a smooth divisor passing through $x$, such that $\ord_x(H\cap Y)=\ord_xY$. By Corollary \ref{IOA}, we have $$\lct_x(X,Y)\geq \lct_x(H,H\cap Y)\geq 1/{\ord_x(H\cap Y)}=1/\ord_xY.$$
This completes the proof.
\end{proof}

\providecommand{\bysame}{\leavevmode \hbox \o3em
{\hrulefill}\thinspace}

\end{document}